\font\Bbb=msbm10 scaled \magstep 2
\def\C{\hbox{\Bbb C}}
\def\R{\hbox{\Bbb R}}
\def\Z{\hbox{\Bbb Z}}
\def\N{\hbox{\Bbb N}}
\font\midBbb=msbm8
\def\midR{\hbox{\midBbb R}}
\def\midZ{\hbox{\midBbb Z}}
\def\midN{\hbox{\midBbb N}}
\newtheorem{theorem}{\bf Theorem}
\newtheorem{lemma}[theorem]{\bf Lemma}
\newtheorem{remark}[theorem]{\bf Remark}
\newtheorem{example}[theorem]{\bf Example}
\newtheorem{corollary}[theorem]{\bf Corollary}
\newtheorem{definition}[theorem]{\bf Definition}
\numberwithin{equation}{section} \numberwithin{theorem}{section}
\title{Hypergeometric polynomials are optimal}
\author{D.V.\,Bogdanov and T.M.\,Sadykov}
\address{Department of Mathematics
\newline \indent and Computer Science,
\newline \indent Plekhanov Russian University
\newline \indent 125993, Moscow, Russia.}
\email{Sadykov.TM@rea.ru}
\thanks{The second author was supported by
the grant of the President of the Russian Federation for state
support of leading scientific schools NSh-9149.2016.1 and the
grant of the Government of the Russian Federation for
investigations under the guidance of the leading scientists of the
Siberian Federal University (contract No.~14.Y26.31.0006)}
\begin{document}

\begin{abstract}
With any integer convex polytope $P\subset\midR^n$ we associate a
multivariate hypergeometric polynomial whose set of exponents is
$\midZ^{n}\cap P.$ This polynomial is defined uniquely up to a
constant multiple and satisfies a holonomic system of partial
differential equations of Horn's type. We prove that under certain
nondegeneracy conditions the zero locus of any such polynomial is
optimal in the sense of~\cite{FPT}, i.e., that the topology of its
amoeba~\cite{Mikhalkin} is as complex as it could possibly be.
Using this, we derive optimal properties of several classical
families of multivariate hypergeometric polynomials.
\end{abstract}

\maketitle

\section{Introduction
\label{sec:introduction}}

Zeros of hypergeometric functions are known to exhibit highly
complicated behavior. The univariate case has been extensively
studied both classically (see e.g. \cite{Klein,Norlund}) and
recently (see~\cite{Dominici,DriverJohnston,ZhouSrivastavaWang}
and the references therein). Already the distribution of zeros of
polynomial instances of the simplest non-elementary hypergeometric
function $_{2}F_{1}\left(a,b;c;x\right)$ is far from being clear.
By letting the parameters $a,b,c$ assume values in various ranges,
one can obtain a wide variety of shapes. Some of them are highly
regular (see e.g. Fig.~\ref{fig:HGaster}) while other are nearly
chaotic.

\begin{figure}[ht!]
\centering{}\includegraphics[width=5cm]{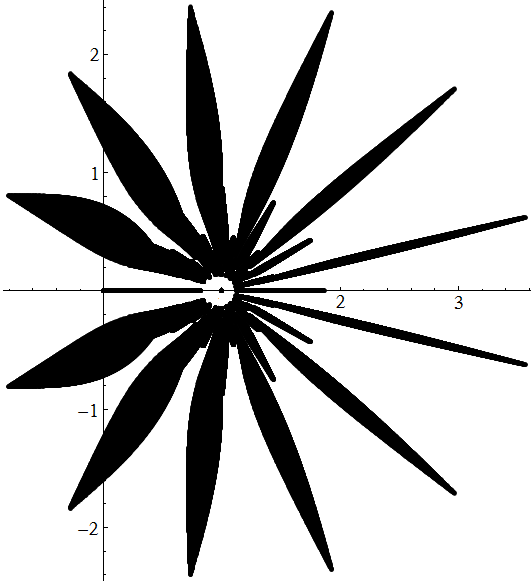} \caption{The
hypergeometric aster: zeros of the polynomials
$_{2}F_{1}\left(-12,b;c;x\right)$ with $b,c\in \left\{
\frac{k}{1000} : k=100,\ldots, 4000 \right\}$} \label{fig:HGaster}
\end{figure}

Polynomial instances of hypergeometric functions in one and
several variables are very diverse. They comprise the classical
Chebyshev polynomials of the first and the second kind, the
Gegenbauer, Hermite, Jacobi, Laguerre and Legendre polynomials as
well as their numerous multivariate analogues~\cite{DunklXu}.

Despite the diversity of families of hypergeometric polynomials,
most of them share the following key properties that justify the
usage of the term "hypergeometric":

1. The polynomials are dense (possibly after a suitable monomial
change of variables).

2. The coefficients of a hypergeometric polynomial are related
through some recursion with polynomial coefficients.

3. For univariate polynomials, there is typically a single
representative (up to a suitable normalization) of a given degree
within a family of hypergeometric polynomials.

4. All polynomials in the family satisfy a differential equation
of a fixed order with polynomial coefficients (or a system of such
equations) whose parameters encode the degree of a polynomial.

5. In the case of one dimension, the absolute values of the roots
of a classical hypergeometric polynomial are all different
(possibly after a suitable monomial change of variables).

6. Many of hypergeometric polynomials enjoy various extremal
properties.

In the present paper, we introduce a definition of a multivariate
hypergeometric polynomial in $n\geq 2$ complex variables that is
coherent with the properties 1-6 listed above. Namely, with any
integer convex polytope $P\subset\R^n$ we associate a multivariate
hypergeometric polynomial whose set of exponents is $\Z^{n}\cap
P.$ For this polynomial to be truly hypergeometric in the sense
made precise below, we need to assume that any pair of points in
$\Z^{n}\cap P$ can be connected by a polygonal line with unit
sides and integer vertices. This assumption does not affect the
generality of the results since any polytope that does not satisfy
this condition gives rise to a finite number of hypergeometric
polynomials that can be considered independently. The assumption
of convexity of the polytope~$P$ is then automatically satisfied
as clarified in Lemma~\ref{lem:HGpolyHasConvSupp}.

The hypergeometric polynomial associated with the polytope~$P$ is
defined uniquely up to a constant multiple and satisfies a
holonomic system of partial differential equations of Horn's
type~\cite{Sadykov-SMZh,SadykovTanabe}. We prove that under
certain nondegeneracy conditions (see
Theorem~\ref{thm:HGpolyHasOptimalAmoeba}) the zero locus of any
such polynomial is optimal in the sense of~\cite{FPT}. Generally
speaking, this means that the topology of the
amoeba~\cite{FPT,Mikhalkin} of such a polynomial is as complicated
as it could possibly be (see Definition~\ref{def:optimalAmoeba}).
This property is the multivariate counterpart of the property of
having different absolute values of the roots for a polynomial in
a single variable. We show various families of classically known
multivariate polynomials to be optimal: a biorthogonal basis in
the unit ball, certain polynomial instances of the Appel~$F_1$
function, bivariate Chebyshev polynomials of the second kind etc.
We also introduce a simplicial complex that encodes intrinsic
combinatorial properties of an algebraic variety while possessing
key properties of its compactified amoeba.

Pictures of amoebas in the paper have been created with
Matlab~7.9. The authors are thankful to L.\,Lang for valuable
comments on the paper.


\section{Hypergeometric systems and amoebas
\label{sec:notation}}

Throughout the paper, we denote by $n$ the number of $x\in\C^n$
variables. For $\alpha=(\alpha_1,\ldots,\alpha_n),$ we use the
notation $|\alpha|=\sum_{i=1}^n \alpha_i$ and $\alpha!=\alpha_{1}!
\ldots \alpha_{n}!.$ For $x=(x_1,\ldots,x_n)$ and
$\alpha=(\alpha_1,\ldots,\alpha_n),$ we denote by $x^\alpha$ the
monomial $x_{1}^{\alpha_1}\ldots x_{n}^{\alpha_n}.$

\begin{definition}\label{def:HGfunction}
\rm A formal Laurent series
\begin{equation}
\sum_{s\in\midZ^n} \varphi(s) \, x^{s} \label{series}
\end{equation}
is called {\it hypergeometric} if for any $j=1,\ldots,n$ the
quotient $\varphi(s+e_{j})/\varphi(s)$ is a rational function
in~$s = (s_1,\ldots,s_n).$ Throughout the paper we denote this
rational function by $P_{j}(s)/Q_{j}(s+e_{j}).$
Here~${\{e_j\}}_{j=1}^{n}$ is the standard basis of the
lattice~$\Z^n.$ By the {\it support} of this series we mean the
subset of~$\Z^n$ on which $\varphi(s)\neq 0.$
\end{definition}

By a {\it hypergeometric function} we will mean a (typically
multi-valued) analytic function obtained by means of analytic
continuation of a hypergeometric series with a nonempty domain of
convergence along all possible paths in~$\C^n$.

\begin{theorem} {\rm (Ore, Sato, see \cite{GGR}.)} The
coefficients of a hypergeometric series are given by the formula
\begin{equation}
\varphi(s) = t^{s} \, U(s) \, \prod_{i=1}^{m} \Gamma(\langle {\bf
A}_{i}, s \rangle + c_{i}), \label{oresatocoeff}
\end{equation}
where $t^s = t_{1}^{s_1}\ldots t_{n}^{s_n},$ $t_i, c_i\in\C,$
${\bf A}_i=(A_{i,1}, \ldots, A_{i,n})$ $\in\Z^n,$ $i =1, \ldots,
m,$ and $U(s)$~is the product of a certain rational function and a
periodic function $\phi(s)$ such that $\phi(s+e_j)\equiv\phi(s)$
for every $j = 1,\ldots,n$. \label{thm:Ore-Sato}
\end{theorem}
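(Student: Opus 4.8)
The plan is to read the hypergeometric hypothesis as a compatible first-order difference system and to solve it explicitly. Writing $R_j(s) = P_j(s)/Q_j(s+e_j)$ for the prescribed rational ratio, the coefficients satisfy $\varphi(s+e_j) = R_j(s)\,\varphi(s)$ for each $j$. Computing $\varphi(s+e_i+e_j)$ in two ways, any such $\varphi$ forces the integrability relations
\begin{equation*}
R_i(s+e_j)\,R_j(s) = R_j(s+e_i)\,R_i(s), \qquad 1\le i,j\le n. \tag{$\ast$}
\end{equation*}
Since $(\ast)$ holds automatically for the $R_j$ arising from our $\varphi$, it suffices to exhibit one explicit solution $\psi$ of the same system that already has the shape \eqref{oresatocoeff}; then $(\varphi/\psi)(s+e_j)\equiv(\varphi/\psi)(s)$ for all $j$, so $\varphi/\psi$ is a common-period function, and absorbing it into the factor $U$ completes the proof.

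The building block is the identity $\Gamma(z+1)=z\,\Gamma(z)$, which shows that a single factor $\Gamma(\langle {\bf A},s\rangle+c)$ contributes to the $e_j$-ratio exactly the Pochhammer block $\prod_{k=0}^{A_j-1}(\langle {\bf A},s\rangle+c+k)$, where $A_j=\langle {\bf A},e_j\rangle$: a string of affine-linear factors lying on one line in the direction ${\bf A}$ and spaced by unit integer shifts. In one variable this mechanism is exhaustive: factoring $R(s)=t\prod_k(s+a_k)/\prod_l(s+b_l)$ and setting $\psi(s)=t^{s}\prod_k\Gamma(s+a_k)/\prod_l\Gamma(s+b_l)$ recovers $R$ as $\psi(s+1)/\psi(s)$, which is Ore's theorem and serves as the base case. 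Two features visible here will be used throughout: the scalar $t$ accounts for the exponential $t^s$, and integrality of a direction vector is forced because $\Gamma(z+\delta)/\Gamma(z)$ is a rational function of $z$ precisely when $\delta\in\Z$, so rationality of $R_j$ compels $\langle {\bf A},e_j\rangle\in\Z$ for every $j$ and hence ${\bf A}\in\Z^n$.

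The heart of the matter, and the step I expect to be the main obstacle, is to prove that every non-constant irreducible factor occurring in any reduced $R_j$ is affine-linear of the form $\langle {\bf A}_i,s\rangle+c_i$, and that these Ore--Sato directions are shared consistently across all coordinate directions. I would organize the irreducible factors of the $R_j$ into orbits under the translation action of $\Z^n$ on $\C[s]$. Rewriting $(\ast)$ as $R_i(s+e_j)/R_i(s)=R_j(s+e_i)/R_j(s)$ shows that the $e_j$-shift of any surviving factor of $R_i$ must coincide, up to a scalar, with an $e_i$-shift of a factor of $R_j$; iterating, every factor drags its entire $\Z^n$-shift orbit into the system. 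If such a factor is not affine-linear its orbit consists of infinitely many pairwise non-proportional irreducibles, which cannot all divide the finitely many polynomials $P_k,Q_k$ --- a contradiction. An affine-linear factor $\langle {\bf A},s\rangle+c$ with ${\bf A}\in\Z^n$, by contrast, has an orbit of parallel copies forming a single arithmetic string that telescopes into one Gamma, leaving only its endpoints in the reduced $R_j$. Hence every factor is affine-linear, and the integrality principle above gives ${\bf A}_i\in\Z^n$ after normalizing each ${\bf A}_i$ to be primitive and absorbing leading coefficients into $t^s$. Making the orbit bookkeeping fully rigorous --- tracking multiplicities and the cancellations that occur at string endpoints --- is the delicate part of the argument.

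Finally I would assemble the solution. Choosing one $\Gamma(\langle {\bf A}_i,s\rangle+c_i)$ per linear-factor string yields the product $\prod_{i=1}^{m}\Gamma(\langle {\bf A}_i,s\rangle+c_i)$, the accumulated scalars give $t^s$, and the residual rational function together with the common-period function $\varphi/\psi$ constitutes $U(s)$ as a product of a rational function and a $\phi$ with $\phi(s+e_j)\equiv\phi(s)$. Strings that naturally belong in a denominator are moved to the numerator by the reflection formula $1/\Gamma(z)=\Gamma(1-z)\sin(\pi z)/\pi$, which replaces $\Gamma(\langle {\bf A},s\rangle+c)^{-1}$ by $\Gamma(\langle -{\bf A},s\rangle+1-c)$ times $\sin(\pi(\langle {\bf A},s\rangle+c))/\pi$; writing this last factor as $e^{i\pi\langle {\bf A},s\rangle}$ times an $e_j$-periodic function lets the exponential be absorbed into $t^s$ and the periodic part into $\phi$. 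This explains why only numerator Gammas are needed and yields exactly the representation \eqref{oresatocoeff}.
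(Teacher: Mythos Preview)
The paper does not contain a proof of this theorem at all: it is stated as a classical result attributed to Ore and Sato, with a reference to \cite{GGR}, and is used thereafter without further justification. So there is no ``paper's own proof'' to compare your proposal against.

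As for your sketch itself, the overall strategy is the standard one and is essentially correct: read the hypergeometric condition as a compatible first-order difference system, reduce to constructing one explicit solution of Ore--Sato type (so that the quotient is $\Z^n$-periodic), and build that solution by grouping the irreducible factors of the $R_j$ into affine-linear strings that telescope into single $\Gamma$-factors. Your use of the reflection formula to turn denominator Gammas into numerator Gammas, with the leftover sine absorbed into $t^s$ and the periodic part of $U$, is exactly the right mechanism and matches how the paper later manipulates $\Gamma$-factors (cf.\ the proof of Lemma~\ref{lem:HGpolyMightHaveAnyNewtonPolytope}).

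The one place where your argument is genuinely incomplete---and you flag it yourself---is the orbit analysis showing that every irreducible factor of any $R_j$ must be affine-linear. The inference ``the $e_j$-shift of any surviving factor of $R_i$ must coincide with an $e_i$-shift of a factor of $R_j$'' is not literally what $(\ast)$ says: cancellations between numerator and denominator within a single $R_k$, and between the two sides of $(\ast)$, can short-circuit the naive orbit-propagation. A clean way to close this is to fix one variable direction at a time: for each $j$, the univariate Ore theorem applied to $s\mapsto \varphi(s+e_j)/\varphi(s)$ with the remaining $s_k$ treated as parameters already forces every factor of $R_j$ to be affine in $s_j$; the compatibility $(\ast)$ then lets you match these affine pieces across different $j$ into common linear forms $\langle {\bf A}_i,s\rangle + c_i$. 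With that refinement the sketch becomes a proof.
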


Given the above data ($t_i, c_i, {\bf A}_i, U(s)$) that determines
the coefficient of a hypergeometric series, it is straightforward
to compute the rational functions $P_{i}(s)/Q_{i}(s+e_{i})$ using
the $\Gamma$-function identity. The converse requires solving a
system of difference equations which is only solvable under some
compatibility conditions on $P_i,Q_i.$ A careful analysis of this
system of difference equations has been performed
in~\cite{Sadykov-SMZh}.

We will call any function of the form~(\ref{oresatocoeff}) {\it
the Ore-Sato coefficient of a hypergeometric series.} In this
paper the Ore-Sato coefficient~(\ref{oresatocoeff}) plays the role
of a primary object which generates everything else: the series,
the hypergeometric system of differential equations, its
polynomial solution (if any) and its amoeba. We will also assume
that $m \geq n$ since otherwise the corresponding hypergeometric
series~(\ref{series}) is just a linear combination of
hypergeometric series in fewer variables (times an arbitrary
function in remaining variables that makes the system
non-holonomic) and~$n$ can be reduced to meet the inequality.

\begin{definition}\label{def:Horn system}\rm
{\it The Horn system of an Ore-Sato coefficient.} A (formal)
Laurent series $\sum_{s\in\midZ^n}\varphi(s)x^s$ whose coefficient
satisfies the relations $\varphi(s+e_j)/\varphi(s) =
P_j(s)/Q_j(s+e_j)$ is a (formal) solution to the following system
of partial differential equations of hypergeometric type
\begin{equation}
x_j P_j(\theta)f(x) = Q_j(\theta)f(x), \,\,\, j=1,\ldots,n.
\label{horn}
\end{equation}
Here $\theta=(\theta_1,\ldots,\theta_n),$ $\theta_j =
x_j\frac{\partial}{\partial x_j}.$ The system~(\ref{horn}) will be
referred to as {\it the Horn hypergeometric system defined by the
Ore-Sato coefficient~$\varphi(s)$} (see~\cite{GGR}) and denoted by
${\rm Horn}(\varphi)$. In this paper we only treat holonomic Horn
hypergeometric systems, i.e. $ {\rm rank} ({\rm Horn}(\varphi))$
is always assumed to be finite.
\end{definition}
We will often be dealing with the important special case of an
Ore-Sato coefficient~(\ref{oresatocoeff}) where $t_i=1$ for any
$i=1,\ldots,n$ and $U(s)\equiv 1.$ The Horn system associated with
such an Ore-Sato coefficient will be denoted by ${\rm Horn}(A,c),$
where~$A$ is the matrix with the rows ${\bf A}_1,\ldots, {\bf A}_m
\in \Z^n$ and $c=(c_1, \ldots, c_m)\in\C^m.$ In this case the
following operators $P_j(\theta)$ and $Q_j(\theta)$ explicitly
determine the system~(\ref{horn}):
$$ P_j(s) = \prod_{i : A_{i,j}>0} \prod_{\ell_j^{(i)}=0}^{A_{i,j}-1} \left( \langle {\bf A}_{i}, s
\rangle + c_{i} + \ell_j^{(i)}\right), \quad Q_j(s) = \prod_{i :
A_{i,j}<0} \prod_{\ell_j^{(i)}=0}^{|A_{i,j}| -1} \left( \langle
{\bf A}_{i}, s \rangle + c_{i} + \ell_j^{(i)}\right). $$

\begin{definition}
\rm The support of a series solution to~(\ref{horn}) is called
{\it irreducible} if there exists no series solution
to~(\ref{horn}) supported in its proper nonempty subset.
\label{def:irreducibleSupport}
\end{definition}

\begin{definition}
\label{def:amoeba} \rm The {\it amoeba}~$\mathcal{A}_f$ of a
Laurent polynomial~$f(x)$ (or of the algebraic hypersurface $\{
f(x)=0 \}$) is defined to be the image of the
hypersurface~$f^{-1}(0)$ under the map ${\rm Log } :
(x_1,\ldots,x_n)\mapsto (\log |x_1|,\ldots,\log |x_n|).$
\end{definition}

Despite losing~$n$ real dimensions, the amoeba of an algebraic
hypersurface encodes its several intrinsic
properties~\cite{FPT,Purbhoo,Theobald}. The main results of the
paper describe topological properties of the amoebas of
hypergeometric polynomials. The next lemma shows that certain
transformations of a polynomial do not affect the topology of its
amoeba.

\begin{lemma}\label{lem:transfThatKeepAmoebaProp}
The number of connected components of the amoeba complement of a
(Laurent) polynomial $p(x_1,\ldots,x_n)$ is the same as that of
the polynomial $x^{a} p(t_1 x^{v_1},\ldots, t_n x^{v_n})^{\ell}$
for any $\ell\in\N,$ $a=(a_1,\ldots,a_n)\in\Z^n,$ $t=(t_1,\ldots,
t_n)\in(\C^{*})^{n}$ and any nondegenerate integer matrix $v$ with
the rows $v_1,\ldots,v_n.$ That is, there is a bijection between
the connected components of the complements of the two amoebas;
moreover, the orders~\cite{FPT} and the recession cones~\cite{PST}
of the corresponding components are transformed into each other by
the linear map with the matrix~$v$.
\end{lemma}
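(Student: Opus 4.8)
The plan is to factor the transformation $p(x)\mapsto x^{a}p(t_1 x^{v_1},\ldots,t_n x^{v_n})^{\ell}$ into three elementary operations and to verify that each preserves the combinatorics of the amoeba complement: (i) raising to the power $\ell$, (ii) multiplication by the monomial $x^{a}$, and (iii) the monomial change of variables $\Phi(x)=(t_1 x^{v_1},\ldots,t_n x^{v_n})$. Operations (i) and (ii) are essentially trivial on the level of amoebas: on the torus $(\C^{*})^{n}$ neither factor vanishes, so $\{x^{a}p^{\ell}=0\}=\{p=0\}$ and the amoeba, hence its complement and the set of its components, is literally unchanged. The only effect is on the labels: by the Forsberg--Passare--Tsikh order formula~\cite{FPT}, which reads off the order of a component as a logarithmic-derivative integral of $f$, multiplication by $x^{a}$ translates every order by $a$ while the power $\ell$ scales every order by $\ell$; in both cases the recession cones are untouched, being invariant under translation and positive scaling.

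The substance lies in~(iii). Since $v$ is a nondegenerate integer matrix, $\Phi$ is a $|\det v|$-to-one holomorphic covering of $(\C^{*})^{n}$, and $q:=p\circ\Phi$ has zero set $\Phi^{-1}(\{p=0\})$. Writing $L=\Log(x)$ one computes
\begin{equation}
\Log(\Phi(x)) = v\,L + b, \qquad b=(\log|t_1|,\ldots,\log|t_n|),
\end{equation}
so that $\Phi$ intertwines the $\Log$-projections with the affine bijection $\psi(L)=vL+b$ of $\R^{n}$. The key point is that $\Phi$ maps the fibre torus $\Log^{-1}(L)$ \emph{onto} $\Log^{-1}(\psi(L))$: on the argument coordinates $\Phi$ acts as the endomorphism $\theta\mapsto v\theta$ of $(\R/2\pi\Z)^{n}$, which is surjective precisely because $\det v\neq 0$. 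Consequently $p$ vanishes somewhere over $L$ after the substitution if and only if it vanishes over $\psi(L)$ before it, i.e. $\mathcal{A}_{q}=\psi^{-1}(\mathcal{A}_{p})$. As $\psi$ is an affine bijection of $\R^{n}$, it carries the connected components of $\R^{n}\setminus\mathcal{A}_{p}$ bijectively onto those of $\R^{n}\setminus\mathcal{A}_{q}$, which is the asserted bijection.

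It remains to track the two labels attached to a component, both governed by~\cite{FPT}. The recession cone lives in the amoeba space $\R^{n}$ on which $\psi$ acts directly, so under $\mathcal{A}_{q}=\psi^{-1}(\mathcal{A}_{p})$ it is transformed by the linear part of $\psi$, namely $v$ (read in the appropriate direction of the bijection). The order lives in the dual space of exponents: expanding $q(x)=\sum_{\alpha}c_{\alpha}t^{\alpha}x^{v^{\!\top}\alpha}$ shows that the substitution sends $\mathrm{Newt}(p)$ to $v^{\!\top}\mathrm{Newt}(p)$, and correspondingly the order of each component -- the lattice point of the Newton polytope indexing it -- is transformed by the dual map $v^{\!\top}$, consistently with the recession cone being the normal cone to $\mathrm{Newt}(p)$ at that point. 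Composing with (i) and (ii), the full effect multiplies orders by $\ell$, applies the linear map induced by $v$, and translates by $a$, while acting on recession cones by $v$ alone. The main obstacle is exactly the fibrewise surjectivity of $\Phi$ invoked above: without $\det v\neq 0$ the image of $\Log^{-1}(L)$ would be a proper subtorus of $\Log^{-1}(\psi(L))$ and the identity $\mathcal{A}_{q}=\psi^{-1}(\mathcal{A}_{p})$ could fail. Establishing this surjectivity, and correctly matching the mutually dual transformation laws for orders versus recession cones, is where the argument must be made watertight.
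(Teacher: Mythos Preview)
Your proof is correct and follows essentially the same decomposition as the paper's proof---monomial factor, power, scaling, and monomial change of variables treated separately---but you make explicit the one point the paper only asserts, namely that the map $\Phi$ is surjective on each torus fibre of $\Log$ because the induced endomorphism $\theta\mapsto v\theta$ of $(\R/2\pi\Z)^n$ is onto when $\det v\neq 0$. Your added remark that orders transform by the dual map $v^{\!\top}$ while recession cones transform by $v^{-1}$ (so that the normal-cone relation in Theorem~\ref{3thmfptestimate} is preserved) is a useful sharpening of the paper's somewhat loose phrasing ``transformed by the linear map with the matrix~$v$.''
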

\begin{proof}
A monomial factor can only vanish in the union of the coordinate
hyperplanes that is mapped off to infinity by the logarithmic map.
The amoeba does not reflect the multiplicities of the zeros of a
polynomial and it can therefore be raised to any positive power.
The map $(x_1,\ldots,x_n)\mapsto (t_1 x_1,\ldots, t_n x_n)$
corresponds to the shift of the amoeba space with respect to the
vector $\log|t_1|,\ldots,\log|t_n|.$ Finally, a nondegenerate
monomial change of variables in the complex torus $(\C^{*})^n$
corresponds to the linear transformation of the amoeba space
defined by the matrix of exponents of the monomials. Clearly a
nondegenerate linear map preserves the topology of amoebas and
provides a bijection for the recession cones of the complement
components. The last statement of the lemma is an immediate
consequence of the definition of the order of a component in the
amoeba complement~\cite{FPT}.
\end{proof}

Recall that the {\it Newton polytope}~$\mathcal{N}_{p(x)}$ of a
Laurent polynomial~$p(x)$ is defined to be the convex hull
in~$\R^n$ of the support of~$p(x).$ The following result shows
that the Newton polytope~$\mathcal{N}_{p(x)}$ reflects the
structure of the amoeba~$\mathcal{A}_{p(x)}$ \cite[Theorem~2.8 and
Proposition~2.6]{FPT}.

\begin{theorem}\label{3thmfptestimate} {\rm (See \cite{FPT}.)}
Let~$p(x)$ be a Laurent polynomial and let~$\{M\}$ denote the
family of connected components of the amoeba
complement~$^c\!\mathcal{A}_{p(x)}.$ There exists an injective
function $\nu : \{M\}\rightarrow \Z^n \cap \mathcal{N}_{p(x)}$
such that the cone which is dual to~$\mathcal{N}_{p(x)}$ at the
point~$\nu(M)$ coincides with the recession cone~\cite{PST}
of~$M.$ In particular, the number of connected components of
$^c\!\mathcal{A}_{p(x)}$ cannot be smaller than the number of
vertices of $\mathcal{N}_{p(x)}$ and cannot exceed the number of
integer points in $\mathcal{N}_{p(x)}.$
\end{theorem}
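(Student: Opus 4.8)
The plan is to build the injection $\nu$ explicitly from the \emph{order map} on the amoeba complement and to control it through the Ronkin function. For a point $u$ in a connected component $M$ of ${}^c\!\mathcal{A}_{p(x)}$, I would set
\[
\nu_k(M) \;=\; \frac{1}{(2\pi i)^n}\int_{\Log^{-1}(u)} \frac{x_k\,\partial p/\partial x_k}{p(x)}\;\frac{dx_1}{x_1}\wedge\cdots\wedge\frac{dx_n}{x_n},\qquad k=1,\ldots,n,
\]
where $\Log^{-1}(u)$ is the torus $\{\,|x_j|=e^{u_j}\,\}$, which misses $p^{-1}(0)$ precisely because $u$ lies in the complement. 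Freezing all variables but $x_k$ and applying the one-variable argument principle shows that the inner integral counts the zeros of $p$ in the disk of radius $e^{u_k}$, so each $\nu_k$ is a nonnegative integer. Since the zeros cannot cross the torus as long as $u$ stays off the amoeba, $\nu$ is locally constant, hence well defined on each component. I would also record here the standard convexity of the components of ${}^c\!\mathcal{A}_{p(x)}$, so that their recession cones are meaningful.

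The second step is injectivity, for which I would invoke the Ronkin function $N_p(u)=\frac{1}{(2\pi i)^n}\int_{\Log^{-1}(u)}\log|p(x)|\,\frac{dx_1}{x_1}\wedge\cdots\wedge\frac{dx_n}{x_n}$. This function is convex on all of $\R^n$, and differentiating under the integral sign identifies its gradient with the order, so $N_p$ restricts to the affine function $u\mapsto\langle\nu(M),u\rangle+\mathrm{const}$ on each component $M$. If two distinct components $M_1,M_2$ shared the same order $\alpha$, then $N_p$ would have gradient $\alpha$ at a point of each; a convex function with equal gradient at two points is affine with that gradient on the joining segment, forcing that segment to avoid the amoeba (where $N_p$ is strictly convex transversally) and placing $u_1,u_2$ in one component — a contradiction. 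Hence $\nu$ is injective.

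The remaining, and hardest, step is to pin the image inside $\Z^n\cap\mathcal{N}_{p(x)}$ and to match recession cones with dual cones. Here I would analyze the asymptotics of $N_p$: its recession function is the support function of $\mathcal{N}_{p(x)}$, equivalently $N_p(u)\sim\max_{\alpha\in\mathrm{supp}\,p}\bigl(\langle\alpha,u\rangle+\log|c_\alpha|\bigr)$ as $|u|\to\infty$, whence every gradient value $\nu(M)$ lies in $\mathrm{conv}(\mathrm{supp}\,p)=\mathcal{N}_{p(x)}$. For the cones, a ray $u+t\rho$, $t\to\infty$, stays in $M$ precisely when a single monomial $c_\alpha x^\alpha$ with $\alpha=\nu(M)$ dominates $p$ along it, which happens exactly when $\rho$ belongs to the normal cone $\{\rho:\langle\nu(M),\rho\rangle\ge\langle\beta,\rho\rangle\ \forall\beta\in\mathcal{N}_{p(x)}\}$, i.e.\ the cone dual to $\mathcal{N}_{p(x)}$ at $\nu(M)$. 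Turning this domination heuristic into a rigorous equality of the recession cone of $M$ with the dual cone — controlling the subdominant terms uniformly along the ray — is the main obstacle and the technical heart of the argument.

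Finally the two bounds follow formally. Injectivity of $\nu$ into $\Z^n\cap\mathcal{N}_{p(x)}$ gives the upper estimate. For the lower estimate, each vertex $v$ of $\mathcal{N}_{p(x)}$ has a full-dimensional dual cone; choosing $u$ deep inside it makes $c_v x^v$ dominate, so $p$ does not vanish on $\Log^{-1}(u)$ and that $u$ lies in a complement component of order $v$. Distinct vertices yield distinct orders and, by injectivity, distinct components, so ${}^c\!\mathcal{A}_{p(x)}$ has at least as many components as $\mathcal{N}_{p(x)}$ has vertices.
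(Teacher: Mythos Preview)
The paper does not prove this theorem at all: it is stated with the attribution ``(See \cite{FPT})'' and used as a black box, so there is no in-paper argument to compare your proposal against. Your outline is in fact the standard Forsberg--Passare--Tsikh proof (order map via torus integrals, Ronkin function convexity for injectivity, asymptotics for the Newton-polytope containment and the recession/dual cone identification), so in that sense you are reproducing the original source rather than diverging from anything the present authors wrote.

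One small slip worth fixing: since $p$ is a \emph{Laurent} polynomial, freezing the other variables and applying the one-variable argument principle yields zeros minus the pole order at the origin, so $\nu_k(M)$ is an integer but need not be nonnegative; this is harmless for the conclusion $\nu(M)\in\Z^n\cap\mathcal{N}_{p(x)}$, which you obtain anyway from the Ronkin-function asymptotics, but the sentence ``each $\nu_k$ is a nonnegative integer'' should be corrected. Your honest flagging of the recession-cone step as ``the main obstacle'' is appropriate: in the FPT argument that step is handled via the affine-linearity of the Ronkin function on $M$ together with its global convexity, rather than by a direct domination estimate along rays, and you may find that route cleaner than trying to control subdominant terms uniformly.
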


The two extreme values for the number of connected components of
the complement of an amoeba are of particular interest.

\begin{definition}\label{def:optimalAmoeba}\rm (Cf.~\cite[Definition~2.9]{FPT}.)
An algebraic hypersurface $\mathcal{H}\subset(\C^{*})^n,$ $n\geq
2,$ is called {\it optimal} if the number of connected components
of its amoeba complement $^c\!\mathcal{A}_{\mathcal{H}}$ equals
the number of integer points in the Newton polytope of the
defining polynomial of~$\mathcal{H}.$ We will say that a
polynomial (as well as its amoeba) is optimal if its zero locus is
an optimal algebraic hypersurface.
\end{definition}

Since the amoeba of a polynomial does not carry any information on
the multiplicities of its roots, any one-dimensional amoeba (which
is just a finite set of distinct points in $a_1,\ldots, a_k\in\R$)
can be treated as the amoeba of the polynomial
$\prod\limits_{j=1}^{k}(x-e^{a_k})$ all of whose roots are
positive and distinct. Thus Definition~\ref{def:optimalAmoeba} is
trivial in the univariate case. The correct extension of
Definition~\ref{def:optimalAmoeba} to one dimension is to say that
all the roots of the polynomial in question have different
absolute values.

\begin{example}\label{ex:HGExampleCorrectingMikhalkin}\rm
In accordance with Definition~\ref{def:Horn system} the Ore-Sato
coefficient
$$
\varphi(s,t) = \Gamma(-s - t + 1)\Gamma(2 s - t - 2)\Gamma(-s + 2
t - 2)
$$
yields the polynomials
$$
\begin{array}{l}
P_{1}(s,t)=(2s-t-2)(2s-t-1), \quad\quad Q_{1}(s,t)=(s-2t+2)
(s+t-1), \\
P_{2}(s,t)=(s-2t+1)(s-2t+2), \quad\quad Q_{2}(s,t)=-(2s-t-2)
(s+t-1).
\end{array}
$$
The corresponding Horn hypergeometric system is given by the
linear differential operators
\begin{equation}
\left\{
\begin{array}{l}
x \, (2\theta_x - \theta_y -2)(2\theta_x - \theta_y -1) - (\theta_x - 2\theta_y + 2)(\theta_x + \theta_y - 1) , \\
y \, (\theta_x - 2 \theta_y + 1)(\theta_x - 2 \theta_y + 2) +
(2\theta_x - \theta_y - 2)(\theta_x + \theta_y - 1).
\end{array}
\right. \label{horn(-1,-1)(2,-1)(-1,2)}
\end{equation}
It is straightforward to check
that~(\ref{horn(-1,-1)(2,-1)(-1,2)}) is satisfied by the
hypergeometric polynomial $p(x,y) = x + y + 6xy + x^2y^2.$

The amoeba of $p(x,y)$ together with its compactified
version~\cite{Mikhalkin} are depicted in
Fig.~\ref{fig:MikhalkinAmoeba}. We remark that the actual shapes
of the amoeba and the compactified amoeba of $p(x,y)$ (with
respect to both directions of the amoeba tentacles and the
complement of its compact version) are rather different from those
given in~\cite{Mikhalkin}.
\end{example}

\begin{figure}[ht!]
\includegraphics[width=5cm]{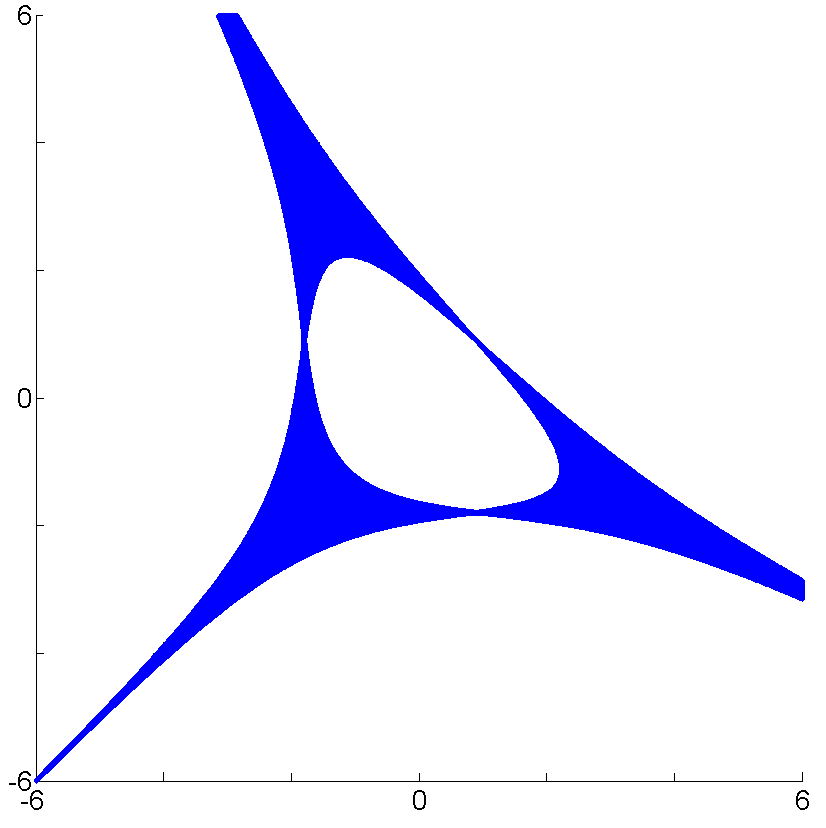}\quad\quad\quad\quad
\includegraphics[width=5cm]{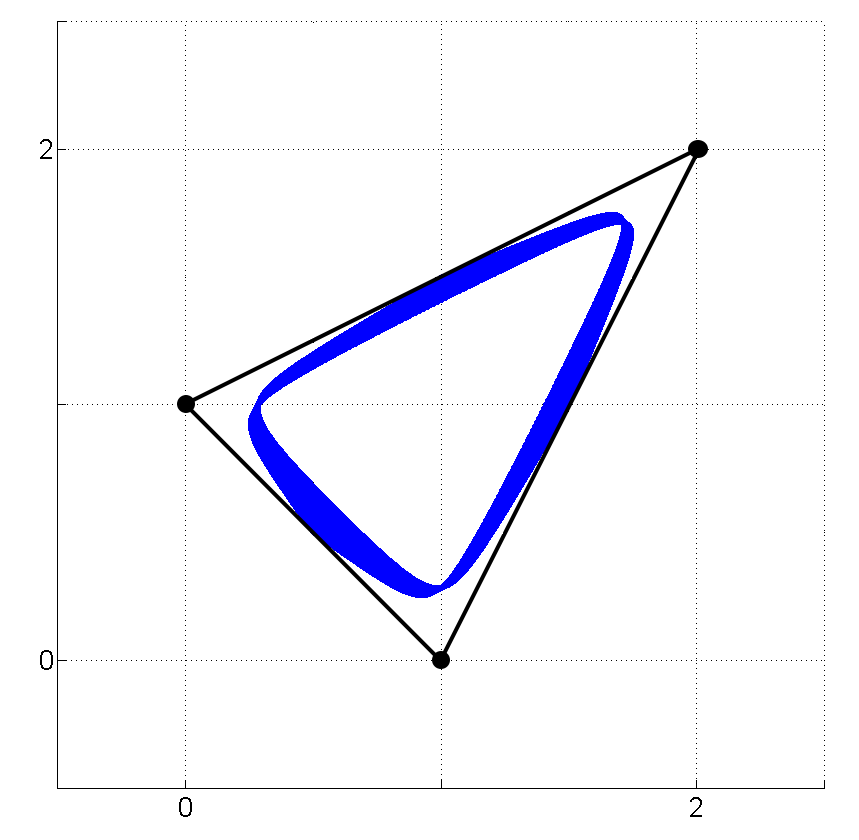}
\caption{The amoeba and the compactified amoeba~\cite{Mikhalkin}
of $p(x,y)$} \label{fig:MikhalkinAmoeba}
\end{figure}

An algebraic hypersurface is optimal if the topology of its amoeba
is as complicated as it could possibly be in the view of
Theorem~\ref{3thmfptestimate} (that is, the number of connected
components in the amoeba complement is maximal). A two-dimensional
optimal amoeba has the maximal possible number of bounded
connected components in its complement and the maximal number of
parallel tentacles.

In the view of Lemma~\ref{lem:transfThatKeepAmoebaProp} we will
not distinguish polynomials whose zero loci in $(\C^{*})^n$ can be
transformed into each other by a nondegenerate monomial change of
variables. The reason for this is illustrated by the following
example.

\begin{example}\label{ex:biorthogonalPolysAreOptimal}\rm
A typical example of a family of optimal polynomials arising in a
different theory is given by the biorthogonal family in the unit
ball $\left\{V_{\alpha}(x)\right\}_{\alpha\in\midN_{0}^{n}},$
$x\in\C^n$ defined through their generating function
(see~\cite[Section~2.3]{DunklXu}):
$$
(1-2\langle a,x \rangle + \|a\|^2)^{\frac{1-n}{2}} =
\sum\limits_{\alpha\in\midN_{0}^{n}} a^{\alpha} V_{\alpha}(x).
$$
Neglecting an inessential monomial factor of $V_{\alpha}(x)$
(whose zero locus is contained in the union of the coordinate
hyperplanes and therefore does not affect the amoeba, see
Lemma~\ref{lem:transfThatKeepAmoebaProp}) one can represent
$V_{\alpha}(x) = \tilde{V}_{\alpha}(\xi)$ with
$\tilde{V}_{\alpha}$ being a polynomial in $\xi_j=x_{j}^{2}.$ One
can check that the zero locus of $\tilde{V}_{\alpha}(\xi)$ is an
optimal algebraic hypersurface in~$\C^{n}.$ The Newton polygon and
the amoeba of the bivariate polynomial~$\tilde{V}_{(6,10)}$ are
depicted in Fig.~\ref{fig:V(6,10)inDunklXu}.

\begin{figure}[ht!]\hskip1cm
\begin{picture}(200,160)

\put(0,20){\circle*{5}} \put(20,20){\circle*{5}}
\put(40,20){\circle*{5}} \put(60,20){\circle*{5}}

\put(0,40){\circle*{5}} \put(20,40){\circle*{5}}
\put(40,40){\circle*{5}} \put(60,40){\circle*{5}}

\put(0,60){\circle*{5}} \put(20,60){\circle*{5}}
\put(40,60){\circle*{5}} \put(60,60){\circle*{5}}

\put(0,80){\circle*{5}} \put(20,80){\circle*{5}}
\put(40,80){\circle*{5}} \put(60,80){\circle*{5}}

\put(0,100){\circle*{5}} \put(20,100){\circle*{5}}
\put(40,100){\circle*{5}} \put(60,100){\circle*{5}}

\put(0,120){\circle*{5}} \put(20,120){\circle*{5}}
\put(40,120){\circle*{5}} \put(60,120){\circle*{5}}

\put(0,20){\line(1,0){60}}\put(0,20){\line(0,1){100}}
\put(0,120){\line(1,0){60}}\put(60,120){\line(0,-1){100}}

\end{picture} \includegraphics[width=6cm]{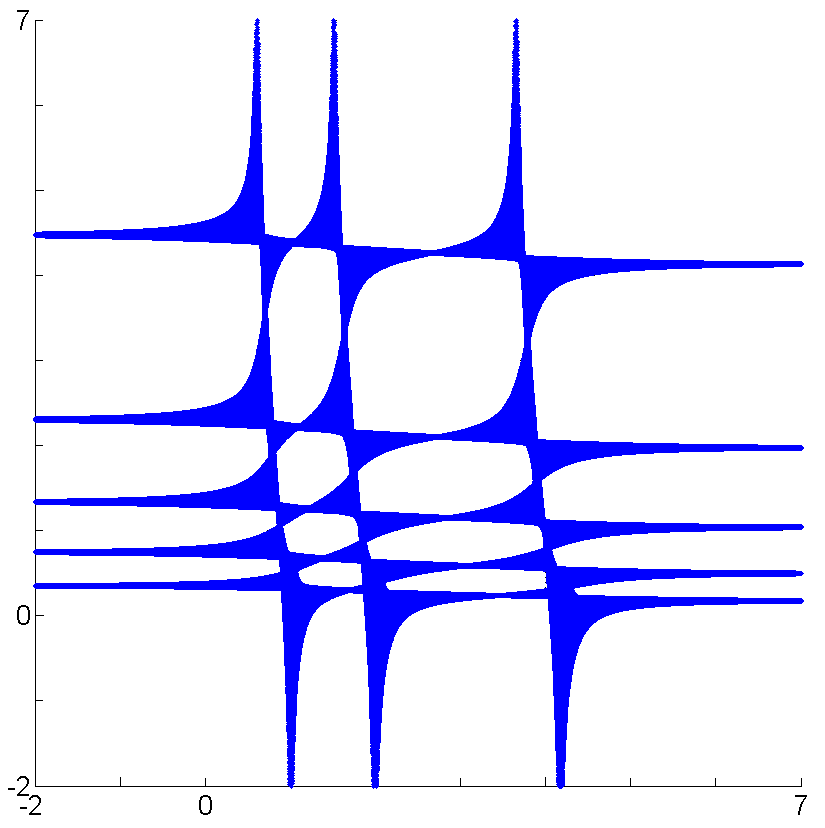}
\caption{The Newton polygon of $\tilde{V}_{(6,10)}(x,y)$
in~\cite[Section~2.3]{DunklXu} and its amoeba}
\label{fig:V(6,10)inDunklXu}
\end{figure}
\noindent Numerous other families of optimal multivariate
polynomials can be found in~\cite[Chapter~2]{DunklXu}.
\end{example}


\section{Hypergeometric polynomials in several variables
\label{sec:HGPolynomials}}

There exist several well-known and important families of
hypergeometric polynomials~\cite{DunklXu,ZhouSrivastavaWang}. What
they all have in common is the fact that they satisfy certain
linear differential equations with polynomial coefficients that
are special instances of~(\ref{horn}). However, the family of all
holonomic systems of partial differential equations of the
form~(\ref{horn}) is far too vast to serve as a definition of a
hypergeometric polynomial. In fact, from the point of view of the
general Definition~\ref{def:HGfunction} given above, any
polynomial in any number of variables is a hypergeometric
function. This is made precise in the following statement.

\begin{theorem}
\label{lem:anyPolyIsHG} For any polynomial
$p(x)\in\C[x_1,\ldots,x_n]$ there exists a nonconfluent~\cite{PST}
holonomic hypergeometric system of the form~(\ref{horn}) having
$p(x)$ as one of its solutions. Moreover, it can be chosen in such
a way that the hypergeometric ideal in the Weyl algebra defining
this system admits a basis that consists of a commutative family
of differential operators.
\end{theorem}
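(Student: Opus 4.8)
The plan is to separate the easy existence of an annihilating Horn operator from the three genuine requirements (holonomicity, nonconfluence, and the commutative basis). First I would translate so that $S:=\operatorname{supp}(p)\subset\N_0^n$ and fix a box $B=\prod_{j=1}^n\{0,\dots,d_j\}\supseteq S$, where $d_j=\deg_{x_j}p$. Writing $p=\sum_{s\in S}c_s x^s$ and using $\theta_j x^s=s_j x^s$, a direct computation with an operator of the form $L_j=x_jP_j(\theta)-Q_j(\theta)$ gives $L_jp=\sum_s\bigl(c_{s-e_j}P_j(s-e_j)-c_sQ_j(s)\bigr)x^s$, so that $L_jp=0$ is equivalent to the finite system of conditions $c_{s-e_j}P_j(s-e_j)=c_sQ_j(s)$ for all $s\in\Z^n$. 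These split into vanishing conditions ($Q_j$ vanishes on the lower boundary $\{s\in S:s-e_j\notin S\}$, $P_j$ vanishes on the upper boundary $\{s\in S:s+e_j\notin S\}$) and interior ratio conditions $P_j(s-e_j)/Q_j(s)=c_s/c_{s-e_j}$. Since these are finitely many prescriptions at distinct lattice points, multivariate interpolation yields polynomials $P_j,Q_j$ realizing them, so an annihilating system of the form~(\ref{horn}) always exists. I would also note at the outset that the purely diagonal operators $\prod_{k=0}^{d_j}(\theta_j-k)$ annihilate every monomial in $B$, hence $p$, and form a commuting holonomic family of rank $|B|$; this shows a commutative holonomic annihilator is cheap, but it is \emph{confluent} and degenerate ($P_j\equiv0$), so the real content is to upgrade it to a genuine nonconfluent system that still commutes.

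To obtain commutativity and holonomicity simultaneously, I would not choose the $P_j,Q_j$ independently by direction but instead derive them from a bona fide Ore--Sato coefficient $\varphi(s)$ as in Theorem~\ref{thm:Ore-Sato}, namely $\varphi(s)=t^sU(s)\prod_{i=1}^m\Gamma(\langle\mathbf A_i,s\rangle+c_i)$, arranged so that $\varphi$ interpolates the prescribed data $c_s$ on $S$. The freedom needed to hit arbitrary interior ratios is carried by the rational factor $U(s)$, while the $\Gamma$-factors are chosen so that their poles bound the support and force termination to a polynomial. The advantage of this route is that the recursion $\varphi(s+e_j)/\varphi(s)=P_j(s)/Q_j(s+e_j)$ coming from an actual coefficient automatically satisfies the compatibility (cocycle) identities between directions $j$ and $k$. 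I would then compute the commutator $[L_j,L_k]$ directly in the Weyl algebra using $x_jg(\theta)=g(\theta-e_j)x_j$; the cross terms reduce to expressions such as $P_j(\theta-e_k)P_k(\theta)-P_k(\theta-e_j)P_j(\theta)$ and their $Q$-analogues, which vanish precisely by the cocycle relations. This produces a commuting generating set for the hypergeometric ideal, and holonomicity follows from the finite rank forced by the terminating $\Gamma$-factors (the rank being bounded by the number of lattice points in the relevant polytope, as in Definition~\ref{def:Horn system}).

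Nonconfluence I would secure by imposing $\sum_{i=1}^m\mathbf A_i=0$, equivalently $\deg P_j=\deg Q_j$ with coinciding principal parts for each $j$. This is compatible with the interpolation because the data $(c_s)_{s\in S}$ pins down only finitely many values of $P_j,Q_j$, which can be absorbed into lower-order terms while a common high-degree principal part is fixed in advance to balance the two sides. The \emph{main obstacle} is exactly the reconciliation at the heart of the second paragraph: arbitrary prescribed interior ratios $c_s/c_{s-e_j}$ must be realized by functions $P_j/Q_j$ that simultaneously obey the cross-directional cocycle conditions and the degree balance. The resolution I would pursue is to let the rational factor $U(s)$ absorb all the ``nongeometric'' data: since $U(s+e_j)/U(s)$ is rational for any rational $U$, one has enough latitude to interpolate the required ratios along each coordinate direction, while the cocycle condition on the $U$-part reduces to the path-independence of the products $\prod_j U(s+e_j)/U(s)$, which holds identically for any single-valued rational $U$. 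Verifying that one can choose such a $U$ together with balanced $\Gamma$-factors whose poles cut out exactly a support containing $S$ is where the bulk of the careful bookkeeping lies; the connectivity hypothesis on the support (each pair of lattice points joined by a unit-step path) is what guarantees the interpolated ratios assemble into a consistent $U$.
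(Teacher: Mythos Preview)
Your proposal takes a much harder route than necessary and contains a genuine gap. The key observation you missed is that the theorem only asks that $p$ be \emph{some} solution of a Horn system, not that $p$ be the canonical solution whose coefficients are the values of the Ore--Sato coefficient. This allows a dramatic shortcut: one can choose operators that annihilate every monomial $x^\beta$ with $\beta\in S$ \emph{individually}, so that any linear combination---in particular $p$ with its given $c_\beta$---is killed automatically. Concretely, the paper sets
\[
\varphi(s)=\frac{\prod_{\alpha\in S}(s_1+\cdots+s_n-|\alpha|)}{\prod_{j=1}^n\prod_{\alpha\in S}(s_j-\alpha_j)},
\]
so that $P_j(\theta)=\prod_{\alpha\in S}(\theta_1+\cdots+\theta_n-|\alpha|)$ and $Q_j(\theta)=\prod_{\alpha\in S}(\theta_j-\alpha_j)$. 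For each $\beta\in S$ the factor $(\theta_1+\cdots+\theta_n-|\beta|)$ already kills $x^\beta$ in the $P_j$ part and $(\theta_j-\beta_j)$ kills it in the $Q_j$ part, so both summands of each operator vanish on every monomial of $p$. Nonconfluence is immediate (the numerator contributes $\#S$ copies of $(1,\ldots,1)$ and the denominator $\#S$ copies of each $e_j$, which balance), and holonomicity and commutativity are read off from Theorem~2.8 and Lemma~2.5 of~\cite{Sadykov-MathScand}.

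Your approach instead tries to interpolate the actual ratios $c_s/c_{s-e_j}$ by manufacturing an Ore--Sato coefficient with a rational factor $U(s)$ matching the data on $S$. The gap is in your last paragraph: you invoke a ``connectivity hypothesis on the support'' to assemble the directional ratios into a single $U$, but the theorem is stated for an \emph{arbitrary} polynomial, and nothing prevents $S$ from being $\Z^n$-disconnected (take $p=1+x_1^3$). You also explicitly leave the ``bulk of the careful bookkeeping'' undone---precisely the step where one must simultaneously interpolate the ratios, enforce the cocycle relations off $S$, balance the degrees for nonconfluence, and place the $\Gamma$-poles so the series terminates exactly on a set containing~$S$. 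Even if all of this could be patched, you would be solving a strictly harder problem (matching coefficients) than the theorem requires; the paper sidesteps it entirely by making the coefficients $c_\beta$ irrelevant.
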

\begin{proof}
In fact, any given polynomial is annihilated by a family of
hypergeometric ideals that contains continuously many elements. To
prove the theorem, we will present an explicit representative with
desired properties.

Let $p(x)=\sum\limits_{\alpha\in S} c_{\alpha} x^{\alpha}$ be a
polynomial with the support $S.$ Here and throughout the proof we
assume $S$ to be finite. We denote by~$\# S$ the cardinality
of~$S$ and let $|\alpha|=\alpha_1 + \ldots + \alpha_n.$ For
$s\in\C^n$ define the Ore-Sato coefficient $\varphi(s)$ by
\begin{equation}
\varphi(s) = \frac{\prod\limits_{\alpha\in S}(s_1 + \ldots + s_n -
|\alpha|)}{\prod\limits_{j=1}^{n} \prod\limits_{\alpha\in S}(s_j -
\alpha_j)}. \label{OSCoeffForGenericPoly}
\end{equation}
By Definition~\ref{def:Horn system}, the action of the $j$-th
hypergeometric differential operator in the system defined by this
Ore-Sato coefficient on $p(x)$ is given by
$$
\left( x_j \prod\limits_{\alpha\in S} (\theta_1 + \ldots +
\theta_n - |\alpha|) - \prod\limits_{\alpha\in S}(\theta_j -
\alpha_j) \right) \sum\limits_{\beta\in S} c_{\beta} x^{\beta} =
$$
$$
\sum\limits_{\beta\in S} c_{\beta} \left( \left(x_j
\prod\limits_{\alpha\in S} (\theta_1 + \ldots + \theta_n -
|\alpha|)\right) x^{\beta} - \left(\prod\limits_{\alpha\in
S}(\theta_j - \alpha_j)\right) x^{\beta} \right) \equiv 0
$$
since $\C[\theta_1,\ldots,\theta_n]$ is a commutative subring in
the Weyl algebra and $(\theta_1 + \ldots + \theta_n - |\alpha|)
x^{\alpha} = (\theta_j - \alpha_j) x^{\alpha}\equiv 0$ for any
$j=1,\ldots,n.$

The hypergeometric system defined by the Ore-Sato
coefficient~(\ref{OSCoeffForGenericPoly}) is nonconfluent by
definition. By Theorem~2.8 in~\cite{Sadykov-MathScand} this system
is holonomic with the holonomic rank~$(\# S)^n.$ The fact that it
is generated by a commutative family of hypergeometric operators
follows from Lemma~2.5 in~\cite{Sadykov-MathScand}.
\end{proof}

Since every monomial in $p(x)$ is annihilated by each operator in
the hypergeometric system defined
by~(\ref{OSCoeffForGenericPoly}), the same argument works for any
Puiseux polynomial with arbitrary exponents in $\C^n.$  Although
$p(x)$ is in the kernels of the differential operators that form a
holonomic hypergeometric system, the monomials in $p(x)$ are in no
way related to each other. To obtain a meaningful definition of a
hypergeometric polynomial based on~(\ref{def:Horn system}), we
will impose further assumptions on the Ore-Sato coefficient that
defines the system.

Throughout the rest of the paper we will only consider polynomials
in $n$ variables whose Newton polytopes have nonzero
$n$-dimensional volume. For if the volume of such a Newton
polytope is zero, a suitable monomial change of variables can be
used to reduce the number of variables.

From now on we will adopt the following definition.

\begin{definition}\label{def:Z-convexSet}\rm
A set~$S\subset \Z^n$ is called $\Z^n$-{\it convex} if the
condition $\{ \lambda s^{(0)} + (1-\lambda)s^{(1)} : \lambda \in
[0,1] \}\cap \Z^n \subset S$ holds for any $s^{(0)},s^{(1)}\in S.$
\end{definition}

\begin{definition}\label{def:Z-connectedSet}\rm
A set~$S \subset \Z^n$ is said to be $\Z^n$-{\it connected} if any
two points of this set can be connected by a polygonal line with
unit sides and vertices in~$S.$
\end{definition}

For instance, the support of the bivariate hypergeometric
polynomial $x + y + 6xy + x^2y^2$ in
Example~\ref{ex:HGExampleCorrectingMikhalkin} is a $\Z^2$-convex
but {\it not} a $\Z^2$-connected set. In fact, it consists of two
$\Z^2$-connected components: $\{(1,0),(0,1),(1,1)\}$ and
$\{(2,2)\}.$

Recall that the support~$S$ of a solution to the hypergeometric
system~(\ref{horn}) is called {\it irreducible} if there is no
nonzero solution to~(\ref{horn}) supported in a proper nonempty
subset of~$S$. Any irreducible support of a solution
to~(\ref{horn}) is always a $\Z^n$-connected set. The following
statement has been established in~\cite{Sadykov-MathScand}.

\begin{lemma}\label{lem:HGpolyHasConvSupp}
(See~\cite{Sadykov-MathScand}.) If the support~$S$ of a polynomial
solution to the system~(\ref{horn}) is irreducible, then~$S$ is a
$\Z^n$-convex set.
\end{lemma}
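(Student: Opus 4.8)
The plan is to read off the geometry of $S$ directly from the two-term recurrences attached to the system~(\ref{horn}). Writing the relation of Definition~\ref{def:Horn system} in cleared form $Q_j(s+e_j)\varphi(s+e_j)=P_j(s)\varphi(s)$ for $j=1,\ldots,n$, I would first establish a \emph{wall lemma}: if $s\in S$ while $s+e_j\notin S$, then $P_j(s)=0$, and symmetrically, if $s\in S$ while $s-e_j\notin S$, then $Q_j(s)=0$. Indeed, if $\varphi(s)\neq0$ and $P_j(s)\neq0$, then the right-hand side is nonzero, forcing $\varphi(s+e_j)\neq0$; the backward statement follows by applying the recurrence at $s-e_j$. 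Since each $P_j$ and each $Q_j$ is, by the explicit formulas following Definition~\ref{def:Horn system}, a product of affine forms $\langle {\bf A}_i,s\rangle+c_i+\ell$, its zero set is a finite union of hyperplanes that are integer level sets of the linear forms $L_i(s)=\langle {\bf A}_i,s\rangle+c_i$. Thus every point at which $S$ fails to propagate by a unit coordinate step lies on this hyperplane arrangement.

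Next I would show that $S$ is confined, in each form $L_i$, to a slab between two of these walls, so that $S\subseteq R:=\bigcap_i\{\beta_i\le L_i\le\gamma_i\}$ for suitable real bounds $\beta_i\le\gamma_i$. Granting this, $R$ is a convex polyhedron, bounded because the Newton polytope of the solution is $n$-dimensional and $m\ge n$. It then remains to see that every integer point of $R$ already belongs to $S$: starting from any point of $S$ and moving by unit coordinate steps toward a prospective $p\in\Z^n\cap R$, the blocking factors $P_j,Q_j$ vanish only on the walls bounding $R$, so along a path through the interior the forward and backward rules of the wall lemma propagate nonvanishing of $\varphi$ all the way to $p$; hence $p\in S$ and $S=\Z^n\cap R$. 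Convexity is then immediate: for $s^{(0)},s^{(1)}\in S\subseteq R$ and $\lambda\in[0,1]$, the point $\lambda s^{(0)}+(1-\lambda)s^{(1)}$ lies in the convex set $R$, so every lattice point on the segment lies in $\Z^n\cap R=S$, which is exactly $\Z^n$-convexity.

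The main obstacle is the confinement step, namely proving that $S$ neither re-enters on the far side of an outer wall nor leaves interior gaps. Here irreducibility is essential: since an irreducible support is $\Z^n$-connected (as noted just before the lemma), any two points of $S$ are joined by a unit-step lattice path inside $S$, and I would argue that such a path can never cross a wall $\{L_i=\mathrm{const}\}$ of the arrangement. The point is that the same linear form $L_i$ governs $P_j$ exactly when $A_{i,j}>0$ and $Q_j$ exactly when $A_{i,j}<0$, so a wall of $L_i$ blocks the recurrence coherently in every coordinate direction whose unit step changes $L_i$; crossing it would simultaneously require the blocking factor to vanish (to leave $S$) and not to vanish (to re-enter $S$), a contradiction, and the analogous bookkeeping for the windows of width $|A_{i,j}|$ at consecutive levels rules out interior gaps. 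A secondary technical point is the periodic factor $\phi(s)$ together with the rational part of $U(s)$ in the Ore--Sato coefficient~(\ref{oresatocoeff}); on a $\Z^n$-connected support these contribute a factor with no zeros or poles, so they create no additional gaps and do not affect the analysis. This is precisely the line of reasoning carried out in~\cite{Sadykov-MathScand}.
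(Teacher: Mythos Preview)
The paper does not give its own proof of this lemma; it is stated with attribution to~\cite{Sadykov-MathScand} and no argument is supplied in the text. There is therefore nothing in the paper against which to compare your proposal directly.

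Your outline does match the strategy of the cited reference: extract from the cleared recurrences $Q_j(s+e_j)\varphi(s+e_j)=P_j(s)\varphi(s)$ the locus on which propagation can fail, show that the support is trapped in a convex region bounded by pieces of that locus, and use irreducibility to fill in every lattice point of that region. Your wall lemma is correct as stated. The thinnest spot is the no-recrossing step: a unit move $s\to s+e_j$ with both endpoints in $S$ is not forbidden by $P_j(s)=0$ alone, since $Q_j(s+e_j)$ may vanish simultaneously and leave the recurrence unconstrained, so ``vanish to leave and not vanish to re-enter'' is not quite the right dichotomy. The actual argument in~\cite{Sadykov-MathScand} tracks the sign pattern of the $A_{i,j}$ and the full window of consecutive levels more carefully to show that the bounding walls are one-sided and that irreducibility (not merely $\Z^n$-connectedness) forces every interior lattice point to carry a nonzero coefficient, since otherwise one could excise a connected piece and obtain a solution on a proper nonempty subset. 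Your remark on the rational and periodic parts of $U(s)$ is also too quick: their zeros and poles contribute genuine factors to $P_j$ and $Q_j$ (not necessarily affine) and must be absorbed into the same wall analysis rather than dismissed. These are refinements of a correct overall plan rather than fatal gaps.
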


We next show that any convex integer polytope supports an
irreducible solution to a suitable instance of the hypergeometric
system~(\ref{horn}).

\begin{lemma}\label{lem:HGpolyMightHaveAnyNewtonPolytope}
For any convex integer polytope~$P\in\R^n$ such that $P\cap\Z^n$
is $\Z^n$-connected, there exists a hypergeometric system of the
form~(\ref{horn}) and its polynomial solution~$p(x)$ with
irreducible support such that $\mathcal{N}_{p(x)}=P.$
\end{lemma}
\begin{proof}
Let $\langle B_{j}, s\rangle + c_{j} =0, \, j=1,\ldots,q$ be the
equations of the hyperplanes containing the faces of~$P$ with
$B_{j}$ being the outer normal to~$P$ at the respective face.
Since~$P$ is an integer polytope, we may without loss of
generality assume the components of the vector~$B_{j}$ to be
integer and relatively prime.

Consider the Ore-Sato coefficient
$$
\varphi(s) = \prod\limits_{j=1}^{q} \Gamma(\langle B_{j}, s\rangle
+ c_{j}).
$$
By Definition~\ref{def:Horn system} the hypergeometric system
defined by $\varphi(s)$ only depends on the quotients
$R_{j}(s):=\varphi(s+e_j)/\varphi(s)$ that are rational functions
in~$s.$ Using the $\Gamma$-function identity
$\Gamma(z)=\frac{\pi}{\Gamma(1-z) \sin(\pi z)}$ together with the
fact that the meromorphic function $\frac{e^{i\pi z}}{\sin(\pi
z)}$ is periodic with the period~$1,$ we conclude that the
quotients~$R_{j}(s)$ coincide with those for the entire function
\begin{equation}
\frac{\exp\left(i\pi\left(\sum\limits_{j=1}^{q} \langle B_{j},
s\rangle + c_{j}\right) \right)}{\prod\limits_{j=1}^{q}
\Gamma(1-\langle B_{j}, s\rangle - c_{j})}.
\label{OScoeffOfHgPoly}
\end{equation}

The support of the polynomial with the coefficient
(\ref{OScoeffOfHgPoly}) does not change if we replace the
numerator of (\ref{OScoeffOfHgPoly}) by~1. In fact, this numerator
is the exponential part $t^s$ of the Ore-Sato coefficient
(\ref{oresatocoeff}) and by
Lemma~\ref{lem:transfThatKeepAmoebaProp} affects neither the
support of the polynomial solutions to the corresponding
hypergeometric system nor the topological properties of their
amoebas. For these reasons we define the coefficient of the
hypergeometric polynomial under construction to be

\begin{equation}
\psi_{P}(s) := \frac{1}{\prod\limits_{j=1}^{q} \Gamma(1-\langle
B_{j}, s\rangle - c_{j})}. \label{realOScoeffOfHgPoly}
\end{equation}

The function $\psi_{P}(s)$ is completely defined by the integer
polytope~$P.$ By the construction the function $\psi_{P}(s)$
vanishes at any lattice point that does not belong to~$P$ and is
positive in~$P.$ Define the polynomial~$p(x)$ to be
$$
p(x) = \sum\limits_{s\in P\cap\midZ^n} \psi_{P}(s) x^s.
$$
By the explicit construction, the polynomial~$p(x)$ is supported
in~$P\cap\Z^n$ and satisfies the hypergeometric system ${\rm
Horn}(\psi_{P}(s)).$ The support is irreducible since $P\cap\Z^n$
is $\Z^n$-connected and since $\psi_{P}(s)$ does not vanish
in~$P.$
\end{proof}

The conclusion of the above lemma still holds even without the
condition of $\Z^n$-connectedness of the set of integer points in
the defining polynomial. However, the support of the polynomial
produced by the construction in the proof of the lemma will no
longer be irreducible. Such a polynomial cannot be considered as
"truly hypergeometric" since it is a linear combination of two or
more polynomials satisfying the same hypergeometric system, see
Example~\ref{ex:HGExampleCorrectingMikhalkin}. The properties of
the amoeba of such a polynomial are in general heavily dependent
on the coefficients of this linear combination. Yet, it is always
possible to choose these coefficients in such a way that the
hypergeometric polynomial with the (reducible) support $P\cap\Z^n$
is optimal. Thus we may and will without loss of generality assume
throughout the rest of the paper that the set $P\cap\Z^n$ is
$\Z^n$-connected.

\begin{remark}\label{rem:motivationForDefOfHGpoly}\rm
One can still {\it not} define a hypergeometric polynomial to be a
polynomial solution to~(\ref{horn}) with a $\Z^n$-convex
irreducible support~$S$ since any polynomial supported in~$S$ will
satisfy this condition. This can be seen by introducing more
factors into the Ore-Sato coefficient that will affect the
coefficients of the polynomial solution but will not corrupt its
support. Instead, we will distinguish the only polynomial that has
support~$S$ and satisfies the hypergeometric system of the
smallest possible holonomic rank.
\end{remark}

The following definition is central in the paper and brings
together the intrinsic properties of the classical families of
hypergeometric polynomials: the denseness of the support, the
irreducibility of the support and the property of being a solution
to a suitable system of linear differential equations with
polynomial coefficients.

\begin{definition}\label{def:trulyHGpolynomial}\rm
By a {\it multivariate hypergeometric polynomial} supported in a
convex integer polytope $P\in\R^n,$ $n\geq 2$ we will mean the
polynomial
\begin{equation}\label{myHGpolyGenForm}
\sum\limits_{s\in P\cap\midZ^n} \psi_{P}(s) x^s
\end{equation}
with $\psi_{P}(s)$ defined by~(\ref{realOScoeffOfHgPoly}).
\end{definition}

By construction, a translation of the defining polytope~$P$ by an
integer vector results in multiplication of the corresponding
hypergeometric polynomial with a monomial. Thanks to
Lemma~\ref{lem:transfThatKeepAmoebaProp} this does not affect the
amoeba of~(\ref{myHGpolyGenForm}). Throughout the rest of the
paper we will identify polytopes that are translations of each
other with respect to an integer vector.

\begin{remark}\rm
The hypergeometric polynomial introduced in
Definition~\ref{def:trulyHGpolynomial} satisfies the
hypergeometric system of the smallest possible holonomic rank
among all hypergeometric systems that admit an irreducible
polynomial solution with the support~$P.$ This property can be
used as the definition of a $P$-supported hypergeometric
polynomial. \label{rem:minimalHoloRank}
\end{remark}

In one dimension, Definition~\ref{def:trulyHGpolynomial} yields a
class of polynomials that is far too small to be interesting.
Namely, for a segment $[a,b]\subset\R$ with the integer endpoints
$a,b$ the polynomial $x^a(x+1)^{b-a}$ is hypergeometric in the
sense of Definition~\ref{def:trulyHGpolynomial}, satisfies a
hypergeometric differential equation (that is a special instance
of~(\ref{horn})) of the smallest possible holonomic rank~1 and
vanishes on an optimal algebraic set. In what follows we will
focus on the multivariate case.

\begin{example}\label{rem:excludeTrivialPolytopes}\rm Here we
compute hypergeometric polynomials associated with certain
families of integer convex polytopes and investigate their
properties.

1) {\it Direct product of segments.} If the polytope~$P$ in
Definition~\ref{def:trulyHGpolynomial} is the direct product of
segments, we may without loss of generality assume it to be
$P=[0,a_1] \times [0,a_2] \times \ldots \times [0,a_n]$ for
$a_j\in\N.$ In this case the Ore-Sato
coefficient~(\ref{realOScoeffOfHgPoly}) is given by
$$
\prod\limits_{j=1}^n
\left(\Gamma(s_j+1)\Gamma(a_j-s_j+1)\right)^{-1}.
$$
The corresponding hypergeometric polynomial is a constant multiple
of $\prod\limits_{j=1}^n (x_j+1)^{a_j}.$ Its amoeba is the union
of the coordinate hyperplanes in~$\R^n$ and is optimal in the
sense of Definition~\ref{def:optimalAmoeba}.

2) {\it A simplex.} Let now the polytope~$P\subset\R^n$ be defined
as the convex hull of the origin and the points
$(0,\ldots,k,\ldots,0)$ ($k$~in the $j$-th position) for
$j=1,\ldots,n.$ The corresponding Ore-Sato coefficient is given by
$$
\left(\Gamma\left(k+1 - \sum\limits_{j=1}^n s_j \right)
\prod\limits_{j=1}^{n} \Gamma(s_j+1) \right)^{-1}.
$$
The hypergeometric polynomial defined by this coefficient is a
constant multiple of $(x_1 + \ldots + x_n + 1)^k.$ It is optimal
in the sense of Definition~\ref{def:optimalAmoeba}, its amoeba
being just a hyperplane amoeba~\cite{FPT}.

3) {\it Cross-polytopes.} Recall that the $n$-dimensional
cross-polytope is the convex polytope whose vertices are all the
permutations of $(\pm 1,0,\ldots,0)\in\R^n.$ The only integer
point of a cross-polytope that is not its vertex is the origin.
The Ore-Sato coefficient defined by the $n$-dimensional
cross-polytope is given by
$$
\left(\prod_{\varepsilon_j = \pm 1, \, j=1,\ldots,n}
\Gamma(\varepsilon_1 s_1 + \ldots + \varepsilon_n s_n + 2)
\right)^{-1}.
$$

It might happen that an algebraic hypersurface is defined by a
polynomial which is irreducible in $\C[x_1,\ldots,x_n]$ but can be
factored in the ring of Puiseux polynomials
$\C[x_{1}^{1/d},\ldots,x_{n}^{1/d}]$ for some $d\in\N$ (see
definition of strong irreducibility below). If both factors
represent branches of the same hypersurface, such a polynomial
cannot be optimal. It turns out that a polynomial of this kind can
be hypergeometric in the sense of
Definition~\ref{def:trulyHGpolynomial}. For instance, let $n=2$
and the polytope~$P$ be the two-dimensional cross-polytope with
the vertices $(1,0),(0,1),(-1,0),(0,-1).$ The Ore-Sato coefficient
associated with this polytope is
$$
\left(\Gamma(2-s-t)\Gamma(2-s+t)\Gamma(2+s-t)\Gamma(2+s+t)\right)^{-1}
$$
and the corresponding hypergeometric polynomial is given (up to a
monomial multiple that is unimportant due to
Lemma~\ref{lem:transfThatKeepAmoebaProp}) by $p(x,y)=x + y + 4 xy
+ x^2 y + x y^2.$ This polynomial is {\it not} optimal. This can
be seen by the direct computation of its amoeba. An alternative
way to prove this is to observe that the origin must belong to the
component of order $(1,1),$ if any, in the complement of the
amoeba of the polynomial $x + y + a xy + x^2 y + x y^2.$ However,
$p(-1,-1)=0$ while the point $(-1,-1)$ is mapped to the origin by
the map~Log. In fact, $p(x,y)$ is "on the boundary" of the set of
optimal polynomials supported in~$P.$

The above becomes clear in view of the following factorization of
$p(x,y)$ in $\C[\sqrt{x},\sqrt{y}]:$
$p(x,y)=\left(\sqrt{x}(1+y)+\sqrt{-1}\sqrt{y}(1+x)\right)\left(\sqrt{x}(1+y)-\sqrt{-1}\sqrt{y}(1+x)\right).$
Both factors represent branches of the same algebraic hypersurface
$\{p(x,y)=0\}$ which can therefore {\it not} be optimal.

We remark that the generic polynomial $p(x_1,\ldots,x_n)= c
+\sum\limits_{j=1}^{n}\left(a_j x_j + \frac{b_j}{x_j}\right)$
supported in the $n$-dimensional cross-polytope and having
positive coefficients $a_j,b_j,c$ is optimal if and only if
$\sum\limits_{j=1}^{n}\sqrt{a_j b_j} < \frac{c}{2}.$ Thus the
hypergeometric polynomial defined by the $n$-dimensional
cross-polytope is optimal if and only if $n>2.$

4) {\it The Hirzebruch surface.} Recall that the Hirzebruch
surface~$\mathbb{F}_1$ is defined by the fan generated by $(1,0),$
$(0,1),$ $(-1,1)$ and $(0,-1).$ The hypergeometric polynomial
supported in (a translation of) the convex hull of these vectors
is defined by the Ore-Sato coefficient
$$
\left(\Gamma(3-t)\Gamma(4-s-t)\Gamma(t
-s+2)\Gamma(2s+t-1)\right)^{-1}.
$$
This polynomial is a constant multiple of $3 x+12 x y+2 x^2 y+2
y^2+3 x y^2$ and is optimal.

\begin{figure}[ht]
\begin{minipage}[h]{0.24\linewidth}
\center{\includegraphics[width=0.99\linewidth]{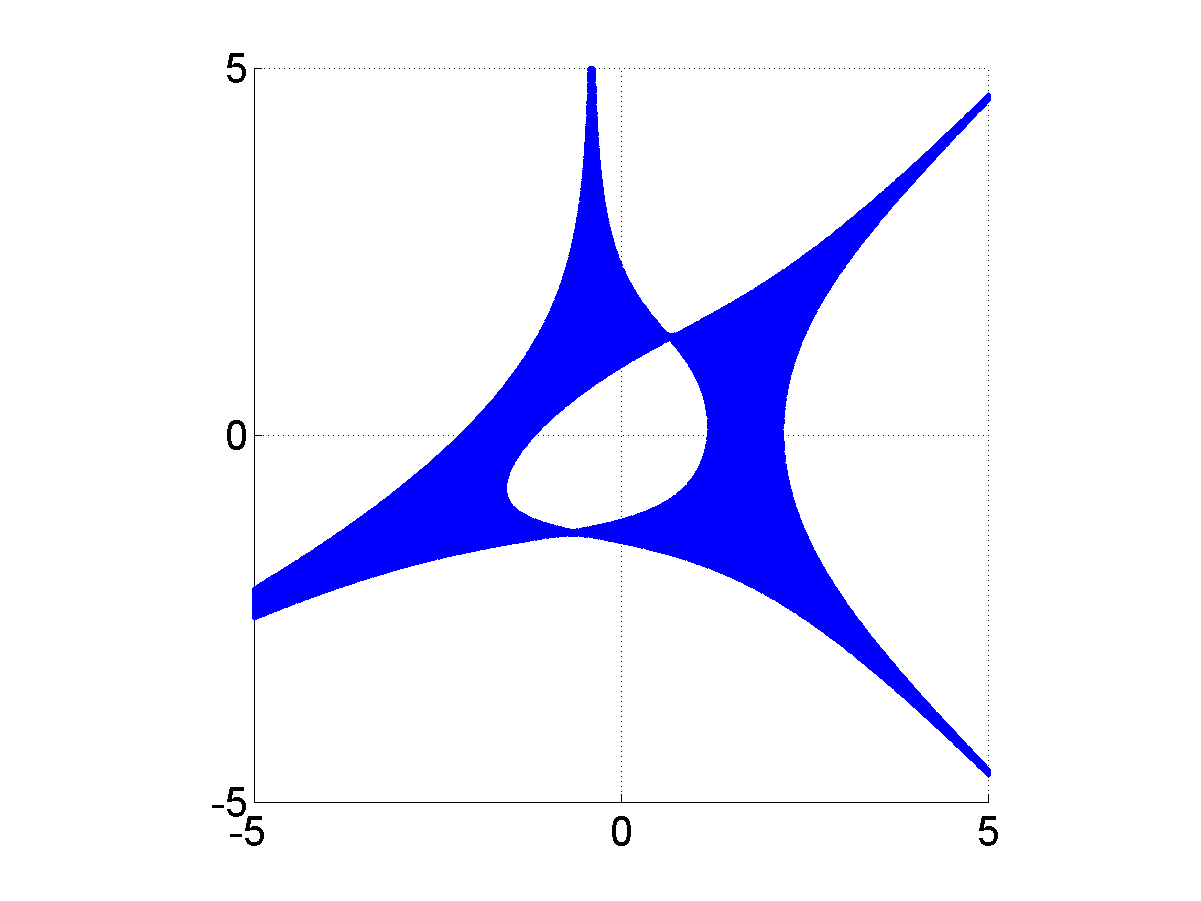} \\ (a)}
\end{minipage}
\begin{minipage}[h]{0.24\linewidth}
\center{\includegraphics[width=0.99\linewidth]{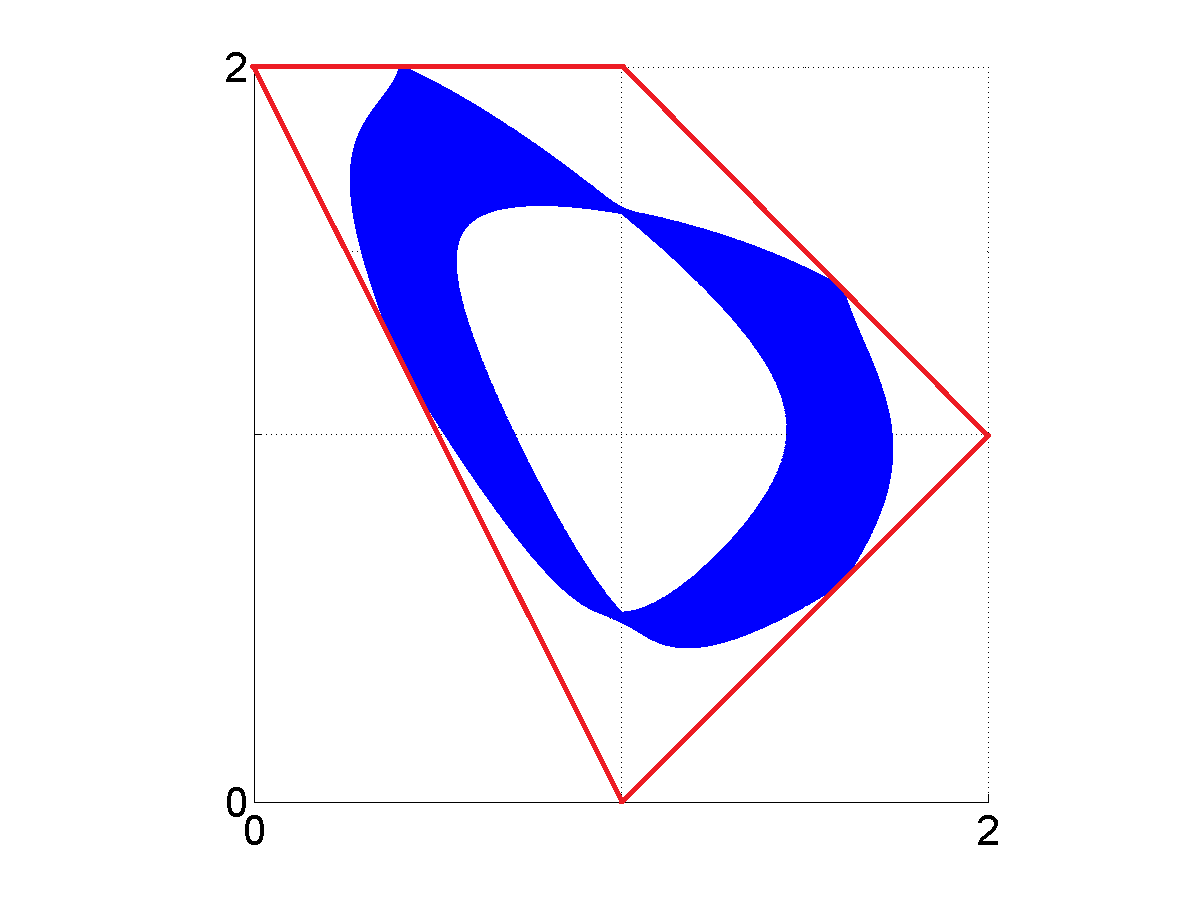} \\ (b)}
\end{minipage}
\begin{minipage}[h]{0.24\linewidth}
\center{\includegraphics[width=0.99\linewidth]{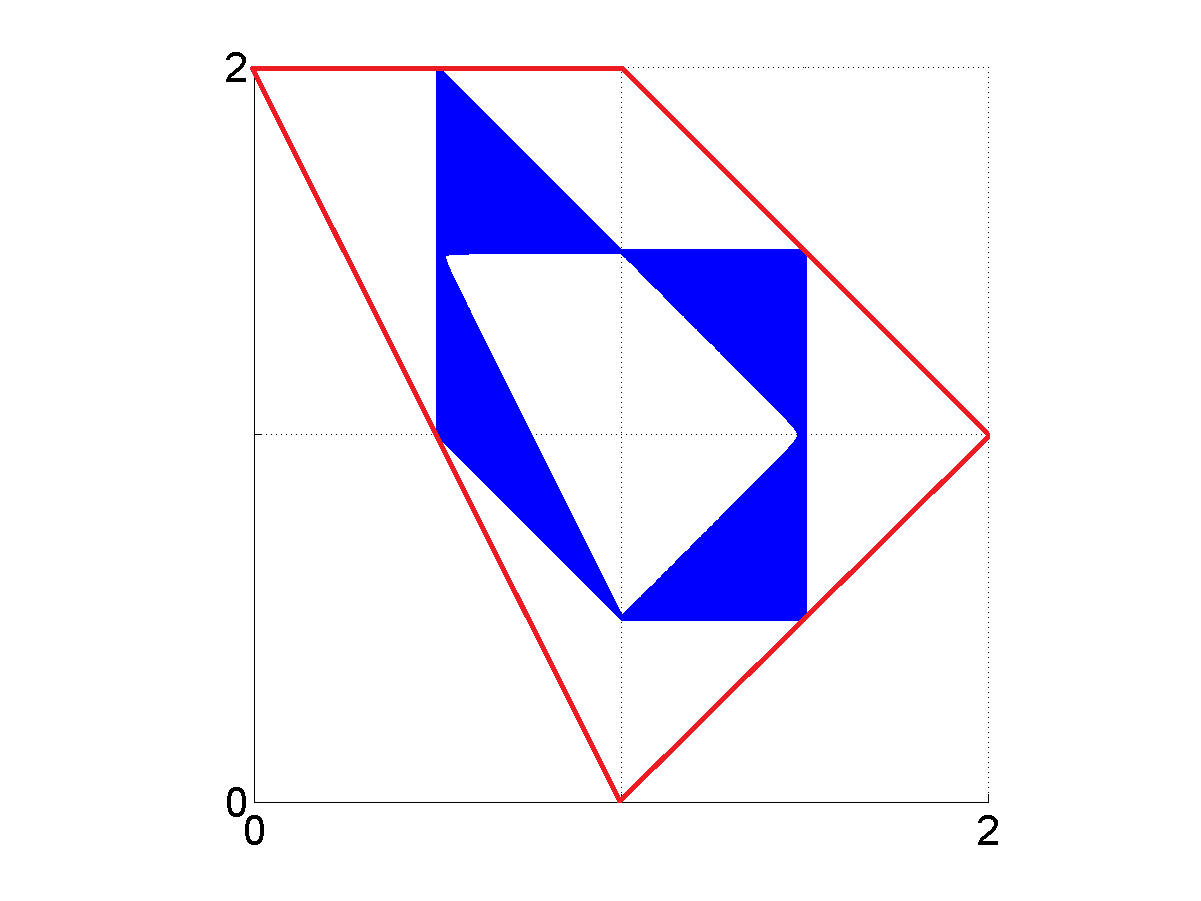} \\ (c)}
\end{minipage}
\begin{minipage}[h]{0.24\linewidth}
\center{\includegraphics[width=0.99\linewidth]{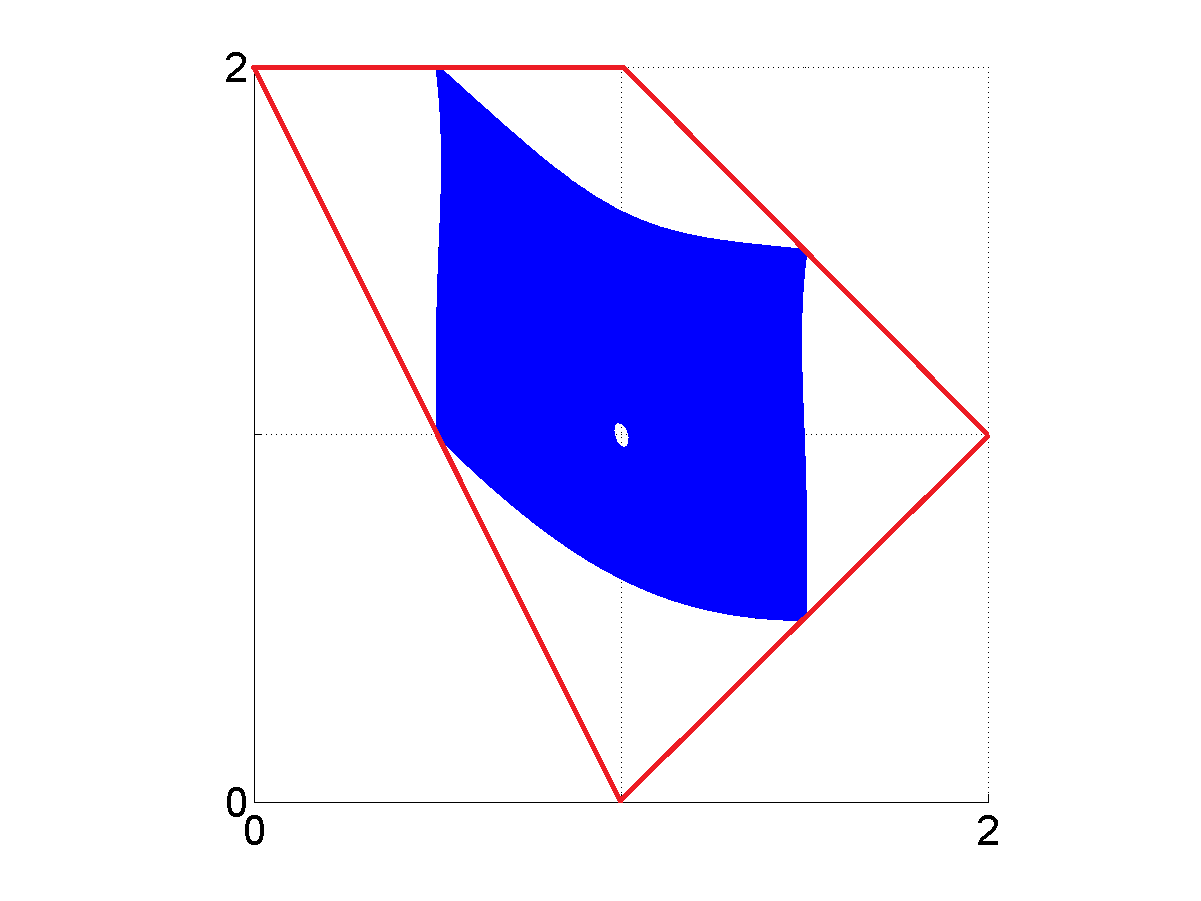} \\ (d)}
\end{minipage}
\caption{The amoeba, compactified amoeba, the weighted
compactified amoeba of the 6th Hadamard power and a vanishing
connected component of the complement to the
$\mathcal{WCA}(p_{3}(x,y))$ (see Definition~\ref{def:weightedCompactifiedAmoeba})} \label{fig:Hirzebruch}
\end{figure}

\end{example}

Recall that a (Laurent) polynomial
$p(x_1,\ldots,x_n)\in\C[x_{1}^{\pm 1},\ldots,x_{n}^{\pm 1}]$ is
called {\it strongly irreducible} if for any nondegenerate integer
matrix~$v$ the polynomial $p(x^{v_1},\ldots,x^{v_n})$ is
irreducible over~$\C$. Here $v_j$ is the $j$th row of the
matrix~$v.$ This definition is only meaningful in $n\geq 2$
variables since there are no univariate strongly irreducible
polynomials. The property of a polynomial being strongly
irreducible in $\C[x_{1}^{\pm 1},\ldots,x_{n}^{\pm 1}]$ is a
generic condition in the Zariski topology.
Example~\ref{rem:excludeTrivialPolytopes}~3) provides a polynomial
that is irreducible but is not strongly irreducible.

By Lemma~\ref{lem:transfThatKeepAmoebaProp} the amoeba of a
polynomial that is not strongly irreducible is the union of the
amoebas of its Puiseux polynomial factors. We thus may without any
loss of generality restrict our attention to strongly irreducible
hypergeometric polynomials.

The hypergeometric polynomials introduced in
Definition~\ref{def:trulyHGpolynomial} enjoy properties that are
parallel to the properties 1--6 listed in the Introduction of the
classical hypergeometric polynomials. Indeed, for any integer
convex polytope there is only one (up to a constant multiple)
hypergeometric polynomial of the form~(\ref{myHGpolyGenForm})
supported in this polytope; this polynomial is dense; it satisfies
the hypergeometric system ${\rm Horn}(\psi_{P}(s)).$ The
counterpart of the properties of the roots
of~(\ref{myHGpolyGenForm}) is given by the next theorem which is
the main result of the paper.

\begin{theorem}\label{thm:HGpolyHasOptimalAmoeba}
Strongly irreducible hypergeometric polynomials are optimal.
\end{theorem}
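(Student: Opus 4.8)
The plan is to show that for a strongly irreducible hypergeometric polynomial $p(x)$ supported in $P\cap\Z^n$, every lattice point of $P$ serves as the order of a distinct connected component of the amoeba complement $^c\!\mathcal{A}_{p(x)}$, which by Theorem~\ref{3thmfptestimate} and Definition~\ref{def:optimalAmoeba} is exactly the statement of optimality. The upper bound on the number of components is already supplied by Theorem~\ref{3thmfptestimate}, so the whole task reduces to realizing the lower bound: producing one complement component of order $s$ for each $s\in P\cap\Z^n$.

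First I would exploit the explicit positivity of the coefficients. By construction~(\ref{realOScoeffOfHgPoly}), the coefficient $\psi_P(s)=\prod_{j=1}^q \Gamma(1-\langle B_j,s\rangle-c_j)^{-1}$ is strictly positive on $P\cap\Z^n$ and vanishes off $P$, so $p(x)$ is a polynomial with all-positive coefficients whose Newton polytope is exactly $P$. The natural strategy is then to localize the amoeba near a lattice point $s_0\in P$: I would restrict attention to the positive real locus and to the ray structure of $\mathrm{Log}$, and argue that for an appropriate regime of $\log|x|$ the monomial $\psi_P(s_0)x^{s_0}$ dominates the entire remaining sum $\sum_{s\neq s_0}\psi_P(s)x^{s}$ in absolute value, forcing $p(x)\neq 0$ there. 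A dominant-term argument of this kind identifies a nonempty open region of the complement whose recession cone is the dual cone of $P$ at $s_0$, hence a component of order exactly $s_0$ in the sense of~\cite{FPT}. The delicate point is that mere positivity and the FPT estimate guarantee at most one component per lattice point, so it suffices to show each of the $\#(P\cap\Z^n)$ candidate orders is actually attained; the hypergeometric structure of the coefficients $\psi_P$ is what should make the requisite growth/decay comparisons between neighboring monomials work out, since along each edge direction the ratios $\psi_P(s+e_j)/\psi_P(s)$ are controlled by the defining linear forms $\langle B_j,s\rangle+c_j$ of the facets.

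The main obstacle, and the place where strong irreducibility must genuinely enter, is ruling out the degenerate behavior illustrated by Example~\ref{rem:excludeTrivialPolytopes}~3): there a polynomial with positive coefficients and full-dimensional Newton polytope still failed to be optimal because it factored over $\C[\sqrt{x},\sqrt{y}]$ into two branches of the same hypersurface, causing an interior lattice point (the origin) to lose its complement component. The hard part will therefore be to prove that when $p$ is strongly irreducible, no such collision of branches occurs and every interior lattice point $s_0$ genuinely carries a bounded complement component. I expect to handle this by showing that a failure of optimality at an interior point forces a nontrivial monomial substitution $x\mapsto x^{v}$ under which $p$ acquires a repeated factor or a factorization reflecting a symmetry of $\psi_P$, contradicting strong irreducibility via Lemma~\ref{lem:transfThatKeepAmoebaProp} (which guarantees that such a substitution only relabels the complement components through the matrix $v$ and cannot create or destroy them).

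Finally I would assemble the two halves: the dominant-monomial construction yields at least $\#(P\cap\Z^n)$ complement components, Theorem~\ref{3thmfptestimate} caps the count at the same number, and strong irreducibility removes the degenerate branch-collision scenario that is the only way the interior orders could fail to be realized. Combined, these give equality in the FPT bound, which is precisely optimality in the sense of Definition~\ref{def:optimalAmoeba}. Throughout I would freely invoke Lemma~\ref{lem:transfThatKeepAmoebaProp} to normalize $P$ (via integer translation and unimodular change of variables) and to reduce to the strongly irreducible case, and I would use the positivity of $\psi_P$ as the engine driving the existence of each complement component.
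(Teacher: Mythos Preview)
Your overall plan---show that every lattice point of $P$ is realized as the order of a complement component---is correct and matches the paper. But the mechanism you propose for producing the interior components is the wrong one, and this is a genuine gap.

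The ``dominant monomial'' argument you describe (find a region of $\mathrm{Log}$-space where $\psi_P(s_0)x^{s_0}$ exceeds the sum of the remaining terms in absolute value) is exactly how one handles \emph{vertices} of $P$, and more generally boundary points: there the dual cone is nontrivial, and going far enough in a dual direction makes one monomial dominate. For an interior lattice point $s_0$, however, the dual cone is $\{0\}$; there is no direction at infinity where $x^{s_0}$ beats every other monomial, and the corresponding complement component, if it exists, is bounded. Its existence is not a growth/decay question but a question about the \emph{size} of the coefficient $\psi_P(s_0)$ relative to its neighbors. Mere positivity of all coefficients is not enough (the cross-polytope example already has all positive coefficients), and control of nearest-neighbor ratios $\psi_P(s+e_j)/\psi_P(s)$ by the facet linear forms does not by itself give a lopsidedness inequality at an interior point.

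What the paper actually uses is that, by the Bohr--Mollerup theorem, the coefficient $\psi_P$ is strictly \emph{logarithmically concave} on the convex hull of the support. Rullg\aa rd's result~\cite{Rullgaard} then guarantees that some high Hadamard power $\psi_P^\ell$ yields an optimal polynomial, and the remaining work is to show $\ell=1$ suffices. For this the paper introduces the weighted moment map and weighted compactified amoeba, reduces (after translating so that $s_0=0$) to checking that the remainder $\tilde p(x)=\sum_{s\neq 0}\psi_P(s)x^s$ does not vanish on the single torus over its unique positive-orthant minimum $\rho$, and then analyzes restrictions of $p$ to lines in $\Z^n$ meeting $S$ in three points. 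Each such restriction is a quadratic $a_L m(x)^2+b_L m(x)+c_L$ with log-concave positive coefficients; its two roots can lie on the same circle only in the degenerate case of a repeated root, and if this happens for \emph{every} such line then $p$ fails strong irreducibility. That line-restriction argument is precisely where strong irreducibility enters, and it is much more concrete than ``failure forces a factorization under some monomial substitution,'' which as stated is a hope rather than a mechanism.

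So: keep your outline, but replace the dominant-term step for interior points by (i) log-concavity of $\psi_P$ via Bohr--Mollerup, (ii) the Rullg\aa rd-type reduction, and (iii) the line-restriction/quadratic analysis that converts a failure at an interior point into a violation of strong irreducibility.
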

\begin{proof}
Let $p(x)$ be a hypergeometric polynomial in the sense of
Definition~\ref{def:trulyHGpolynomial} with the variables
$x=(x_1,\ldots,x_n)\in\C^n$ and supported in a finite
$\Z^n$-connected set $S\subset\Z^n.$ Denote by $\varphi(s)$ the
Ore-Sato coefficient of $p(x).$ By definition, the function
$\varphi(s)$ is well-defined, finite and positive on~$S.$

By the comment after Remark~\ref{rem:minimalHoloRank} a univariate
polynomial that is hypergeometric in the sense of
Definition~\ref{def:trulyHGpolynomial} is optimal. From now on we
will consider the case $n\geq 2.$

By~\cite{FPT}, the order of the connected component~$M$ of the
complement of the amoeba $\mathcal{A}_f$ is given by the vector
with the coordinates
$$
\frac{1}{(2\pi i)^n}\int\limits_{\rm{Log}^{-1}(\xi)}
\frac{\theta_{j}f(x)}{f(x)} \frac{dx_1 \wedge\ldots\wedge
dx_n}{x_1\ldots x_n}, \quad j = 1,\ldots,n.
$$
Here $\xi$ is any point in $M.$ We will show that for any
$\alpha\in S$ there exists a connected component of order
$\alpha\in S\subset\Z^n$ in the amoeba
complement~$^c\!\mathcal{A}_{p(x)}.$ By
Lemma~\ref{lem:transfThatKeepAmoebaProp} it suffices to consider
the case when~$\alpha$ belongs to the interior of the convex hull
(in~$\R^n$) of~$S.$ Indeed, if~$\alpha$ is a vertex of
$\mathcal{N}_{p(x)}$ then the desired conclusion is established in
Theorem~\ref{3thmfptestimate}. If~$\alpha$ is on the boundary of
the convex hull of~$S$ but is not its vertex, then a suitable
nondegenerate monomial change of variables can be used to reduce
the dimension of the variable space and put~$\alpha$ into the
interior of the Newton polytope of a polynomial in fewer variables
obtained from $p(x)$ by setting some of its variables to zero.

Furthermore, we may without loss of generality assume that
$\alpha=0.$ Indeed, by Lemma~\ref{lem:transfThatKeepAmoebaProp}
the amoebas of $p(x)$ and $x^{-\alpha} p(x)$ are the same and the
component of order~$\alpha$ in $^c\!\mathcal{A}_{p(x)}$ coincides
with the component of order~$0$ in
$^c\!\mathcal{A}_{x^{-\alpha}p(x)}.$

By the Bohr-Mollerup theorem the
coefficient~(\ref{realOScoeffOfHgPoly}) of a hypergeometric
polynomial is a positive and strictly logarithmically concave
function on the convex hull of the support of this polynomial. It
follows from~\cite[Corollary~2]{Rullgaard} that a sufficiently
high positive (integer) power~$\ell$ of the Ore-Sato
coefficient~$\varphi(s)$ defines an optimal hypergeometric
polynomial. In fact, any sufficiently big positive real power
of~$\varphi(s)$ will define an optimal polynomial but it will only
satisfy a hypergeometric system of equations for integer powers.

It remains to check that $\ell=1$ works. Let the Ore-Sato
coefficient~$\psi_{P}(s)$ be defined
by~(\ref{realOScoeffOfHgPoly}). By definition the function
$$
\tilde{p}(x):=\sum\limits_{\tiny\begin{array}{c}s\in S \\
s\neq 0 \end{array}} \psi_{P}(s) x^s
$$
is positive and convex in~$\R_{+}^{n}$. It tends to infinity as
$x_{j}\rightarrow 0$ or $x_{j}\rightarrow \infty$ for any
$j=1,\ldots,n.$ Thus there exists the unique minimum
$\rho=(\rho_1,\ldots,\rho_n)\in\R^n$ such that
$\tilde{p}(x)\geq\tilde{p}(\rho)$ for any $x\in\R_{+}^{n}.$

\begin{definition} \rm Following the
ideas of~\cite{Zharkov}, we define the {\it weighted moment map}
associated with the algebraic hypersurface $\{ x\in\C^n : f(x):=
\sum\limits_{s\in S} a_s x^s = 0\}$ through
$$
\mu_{f}(x) := \frac{\sum\limits_{s\in S} s\cdot |a_s|
|x^s|}{\sum\limits_{s\in S} |a_s| |x^s|}.
$$
\label{def:weightedMomentMap}
\end{definition}
It follows from the general theory of moment maps~\cite{Guillemin}
that $\mu_{f}(\C^n)\subseteq \mathcal{N}_{f}.$
\begin{definition}\rm
By the {\it weighted compactified amoeba} of an algebraic
hypersurface $H=\{x\in\subset\C^n : f(x)=0\}$ we will mean the set
$\mu_{f}(H).$ We denote it by $\mathcal{WCA}(f).$
\label{def:weightedCompactifiedAmoeba}
\end{definition}
By~\cite{Guillemin} there exists a deformation~$\hat{p}(x)$ of the
Laurent polynomial~$\tilde{p}(x)$ such that
$\tilde{p}(x)-\hat{p}(x)=\delta\in\R_{+}$ and moreover the
complement of $\mathcal{WCA}(\hat{p}(x)-\varepsilon)$ contains the
connected component~$M\ni 0\in\R^n$ of order $0\in\Z^n$ for any
$\varepsilon\geq 0.$ In other words, there exists a deformation of
the constant term of~$\tilde{p}(x)$ which makes the complement
component of order zero vanish at the origin. This is illustrated
by Fig.~\ref{fig:Hirzebruch}~{\it(d)}
and~\ref{fig:quadrilateral}~{\it(d)}.

It is therefore sufficient to check that the origin cannot belong
to $\mathcal{WCA}(\tilde{p}(x)).$ The preimage of the origin under
the map~$\mu_{\tilde{p}}$ is given by
$$
\Pi=\left\{ x\in\C^n :
\frac{\partial\tilde{p}(\zeta)}{\partial\zeta_j}\Big|_{\zeta_1=|x_1|,\ldots,\zeta_n=|x_n|}
= 0 \text{\ for\ any\ } j=1,\ldots,n \right\}.
$$
Since the only point in~$\R_{+}^{n}$ where the gradient
of~$\tilde{p}(x)$ vanishes is~$\rho,$ it follows, that
$$
\Pi=\left\{ (\rho_{1} e^{i\chi_1},\ldots,\rho_{n} e^{i\chi_n}) :
\chi_j\in [0,2\pi] \right\} .
$$
To prove the existence of the complement component of order zero
we need to show that the Laurent polynomial~$\tilde{p}(x)$ cannot
vanish on~$\Pi.$

Consider the set~$\mathcal{L}$ of lines in~$\R^n$ such that the
intersection of any element in~$\mathcal{L}$ with~$S$ contains
three points. If~$\mathcal{L}$ is empty then the
polynomial~$\tilde{p}(x)$ is optimal by~\cite{FPT}. From now on we
assume that~$\mathcal{L}$ is nonempty. Let $L\in\mathcal{L}$ and
consider the restricted polynomial $p|_{L}(x):=\sum\limits_{s\in
S\cap L} \varphi(s) x^s.$ Since the coefficients of~$p(x)$ are
defined as the restriction of a logarithmically concave function
to the support of~$p(x),$ the same is true for the coefficients of
the restricted polynomial~$p|_{L}(x).$ Making, if necessary, a
monomial change of variables, we conclude that~$p|_{L}(x) = a_L
m(x)^2 + b_L m(x) + c_L,$ where $a_L,b_L,c_L\in\Z$ and $m(x)$ is a
monomial function in the coordinates of~$x.$ Moreover, this
second-order polynomial has a logarithmically concave
hypergeometric coefficient and vanishes on a certain circle
centered at the origin.

The logarithmic concavity of the coefficient of $p|_{L}(x)$ is
equivalent to $b_L\geq\sqrt{a_L c_L}.$ Since the coefficients of
$p|_{L}(x)$ are positive integers, its roots are on the same
circle centered at the origin if and only if $p|_{L}(x) = a_L
m(x)^2 - 2 a_L m(\rho) \cos\omega\, m(x) + a_{L} m(\rho)^2,$ where
$\cos\omega\leq\frac{1}{2}.$ Such a polynomial can only be a
restriction of a multivariate hypergeometric polynomial in the
case when $\omega=\pi$ and its roots are the same. If this holds
for any $L\in\mathcal{L}$ then the initial polynomial $p(x)$
cannot be strongly irreducible.

This means that there is a connected component of order~$\alpha$
in the complement of the amoeba~$\mathcal{A}_{p(x)}.$ Since
$\alpha\in S\subset\Z^n$ was arbitrary, the amoeba is optimal.
\end{proof}

We stress once again that by
Definition~\ref{def:trulyHGpolynomial} the optimality of a
polynomial is the property of its zero locus and not the
polynomial itself. For instance, the bivariate hypergeometric
polynomial $(1-x-y)^2$ is optimal since its zeros form the optimal
algebraic hypersurface $\{x+y=1\}.$

The class of dense optimal multivariate polynomials with
$\Z^n$-convex supports is of course much wider than the class of
hypergeometric polynomials. By~\cite{Rullgaard} the coefficient of
a polynomial only has to be "logarithmically concave enough" for
the polynomial itself to be optimal.

Example~\ref{rem:excludeTrivialPolytopes}~{\it(a)} shows that the
condition of (strong) irreducibility is in general not necessary
for a hypergeometric polynomial to be optimal. On the other hand,
the polynomial in
Example~\ref{rem:excludeTrivialPolytopes}~{\it(c)} fails the
strong irreducibility condition and is not optimal due to the fact
that it factors into the product of two different Puiseux
polynomials with the same amoebas. Throughout the rest of the
paper we will without loss of generality only consider strongly
irreducible hypergeometric polynomials.

\begin{example}\label{ex:(Gamma[t-1]Gamma[4s-2t+1]Gamma[-4s-4t+25])^(-1)}\rm
The bivariate Ore-Sato coefficient
$$
\varphi(s,t) =
\left(\Gamma(t+1)\Gamma(1+6s-3t)\Gamma(31-6s-2t)\right)^{-1}
$$
defines a confluent holonomic hypergeometric system with the
polynomial solution
$$
\begin{array}{c}
p_{0}(x,y) = 1+593775 x+86493225 x^2+86493225 x^3+593775
x^4+x^5+39331656000 x y + \\
34936343442000 x^2 y+55898149507200 x^3 y+216324108000 x^4
y+54513675216000 x y^2 + \\
2112950051372160000 x^2 y^2+6867087666959520000 x^3
y^2+10357598291040000 x^4 y^2+ \\
15382276373989324800000 x^2 y^3+169205040113882572800000 x^3 y^3+\\
33807200821954560000 x^4 y^3+3045690722049886310400000 x^2 y^4+\\
639595051630476125184000000 x^3 y^4+184203374869577124052992000000
x^3 y^5+ \\
368406749739154248105984000000 x^3 y^6.
\end{array}
$$
The support of this hypergeometric polynomial is $\Z^2$-convex and
has a triangular convex hull. The amoeba of $p_{0}(x,y)$ is
optimal, see
Fig.~\ref{fig:(Gamma[t-1]Gamma[4s-2t+1]Gamma[-4s-4t+25])^(-1)}.
\end{example}

\begin{figure}[ht!]
\begin{picture}(120,120)

\put(0,0){\circle*{5}} \put(20,0){\circle*{5}}
\put(40,0){\circle*{5}} \put(60,0){\circle*{5}}
\put(80,0){\circle*{5}} \put(100,0){\circle*{5}}

\put(20,20){\circle*{5}} \put(40,20){\circle*{5}}
\put(60,20){\circle*{5}} \put(80,20){\circle*{5}}

\put(20,40){\circle*{5}} \put(40,40){\circle*{5}}
\put(60,40){\circle*{5}} \put(80,40){\circle*{5}}

\put(40,60){\circle*{5}} \put(60,60){\circle*{5}}
\put(80,60){\circle*{5}}

\put(40,80){\circle*{5}} \put(60,80){\circle*{5}}

\put(60,100){\circle*{5}}

\put(60,120){\circle*{5}}

\put(0,0){\line(1,0){100}} \put(0,0){\line(1,2){60}}
\put(60,120){\line(1,-3){40}}

\end{picture} \quad\quad\quad
\includegraphics[width=10cm]{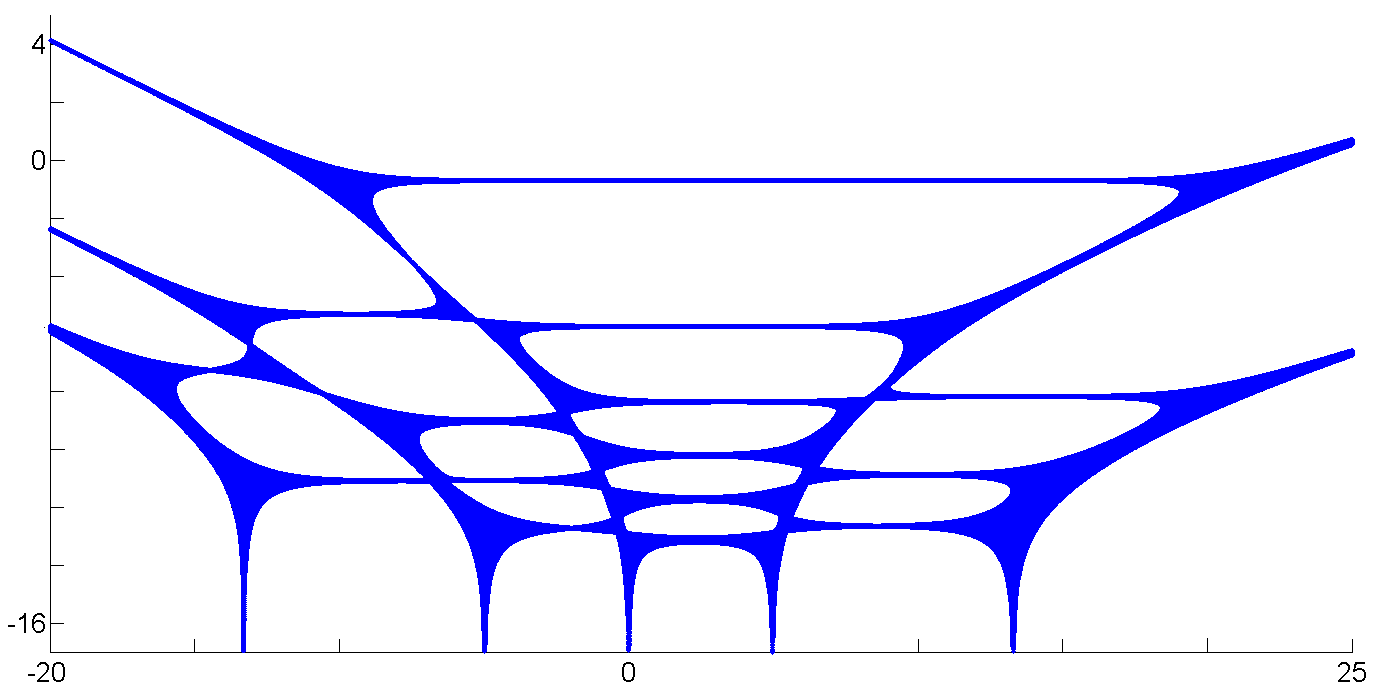}\caption{The Newton
polygon and the amoeba of $p_{0}(x,y)$}
\label{fig:(Gamma[t-1]Gamma[4s-2t+1]Gamma[-4s-4t+25])^(-1)}
\end{figure}

\begin{remark}\label{rem:numeratorsOfBergmanKernelsAreNotOpt}\rm
Recall that the Bergman kernel of a complex ellipsoidal domain is
given by a rational hypergeometric function~\cite{PST}. One can
check that the numerators of such rational functions are not
necessarily optimal polynomials. The amoebas of the singular
divisors of the GKZ-hypergeometric functions~\cite{Beukers2009}
are known to be solid~\cite{PST}. Thus the optimal property of the
divisors of hypergeometric polynomials cannot be extended to the
classes of rational or algebraic hypergeometric functions.
\end{remark}

Recall that the {\it Hadamard power} of order~$r\in\R$ of a
polynomial $f(x)=\sum\limits_{s\in S} a_s x^s$ is defined to be
$f^{[r]}(x):=\sum\limits_{s\in S} a_{s}^{r} x^s.$ We observe that
the set-theoretical limit
$\mathcal{S}(f):=\lim\limits_{r\rightarrow\infty}
\mathcal{WCA}(f^{[r]})\subset\mathcal{N}_{f}$ is an amoeba-like
simplicial complex. This simplicial complex for the Hirzebruch
polynomial is depicted in Fig.~\ref{fig:Hirzebruch}~{\it(c)}
inside the Newton polygon of that polynomial. An approximation of
the simplicial complex~$\mathcal{S}(p_{3}(x,y))$ is depicted in
Fig.~\ref{fig:quadrilateral}~{\it(c)}. The geometry
of~$\mathcal{S}(f)$ is related to the amoeba of~$f$ while the
combinatorics of~$\mathcal{S}(f)$ reflects intrinsic algebraic
properties of this polynomial.

\section{Classical bivariate hypergeometric polynomials}\label{sec:classicalBivariatePolys}

Despite varying terminology, the classical hypergeometric series
$F_1,\ldots,F_4,$ $G_1,\ldots,G_3,$ $H_1,\ldots,H_7$ as well as
other entries of the Horn list~\cite{Erdelyi} are universally
considered to be intrinsically hypergeometric. For resonant
parameters~\cite{SadykovTanabe}, many of these series terminate
and turn out to be bivariate hypergeometric polynomials.

Appell's~$F_1$ is one of the most important classical
hypergeometric series since by the results of~\cite{Erdelyi} any
bivariate hypergeometric system of second-order equations and
holonomic rank~3 can be transformed into the system for~$F_1$ or a
particular limiting case of this system. The following statement
follows from Theorem~\ref{thm:HGpolyHasOptimalAmoeba}.

\begin{corollary}
The polynomial instances of the Appell $F_1(a,b_1,b_2,c;x,y)$
hypergeometric function are optimal for $a,b_1,b_2,-c<0$ and
$a>b_1+b_2.$
\end{corollary}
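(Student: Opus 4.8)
The plan is to show that the Appell $F_1$ polynomials, under the stated parameter restrictions, fall into the framework of Definition~\ref{def:trulyHGpolynomial} and hence inherit optimality from Theorem~\ref{thm:HGpolyHasOptimalAmoeba}. First I would recall that the polynomial instances of $F_1(a,b_1,b_2,c;x,y)$ arise precisely when $a$ is a negative integer, which is why the condition $a<0$ appears; the series then terminates and its support is a finite subset of $\Z_{\geq 0}^2$. The coefficient of $x^m y^n$ in $F_1$ is the Ore--Sato coefficient
$$
\varphi(m,n) = \frac{(a)_{m+n}(b_1)_m (b_2)_n}{(c)_{m+n}\, m!\, n!},
$$
and I would rewrite each Pochhammer symbol as a ratio of $\Gamma$-functions so that $\varphi$ takes the shape of~(\ref{realOScoeffOfHgPoly}). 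The parameter inequalities $a,b_1,b_2,-c<0$ and $a>b_1+b_2$ are exactly what is needed to guarantee that the reciprocal $\Gamma$-factors land in the denominator with integer linear forms having the correct signs, so that the support is a genuine two-dimensional convex polytope (a triangle truncated by the termination of the series) rather than a degenerate lower-dimensional set.

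Next I would identify the Newton polytope $P$ of this polynomial explicitly. Since $a=-N$ for a positive integer $N$, the factor $(a)_{m+n}$ forces $m+n\leq N$, cutting off a triangle; the factors $(b_1)_m$, $(b_2)_n$ together with the sign condition $a>b_1+b_2$ determine the remaining facets. The key computation is to verify that the four (or however many) linear forms $\langle B_j,s\rangle + c_j$ defining $\varphi$ match the facet normals $B_j$ of $P$ as demanded by the construction~(\ref{realOScoeffOfHgPoly}), and that $P\cap\Z^2$ is $\Z^2$-connected in the sense of Definition~\ref{def:Z-connectedSet}. Given that, the coefficient $\varphi$ is, up to the inessential exponential prefactor removed in Lemma~\ref{lem:HGpolyMightHaveAnyNewtonPolytope}, exactly the canonical hypergeometric coefficient $\psi_P(s)$ of Definition~\ref{def:trulyHGpolynomial}.

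With the polynomial recognized as the $P$-supported hypergeometric polynomial, Theorem~\ref{thm:HGpolyHasOptimalAmoeba} delivers optimality provided the polynomial is strongly irreducible. So the last step is to invoke strong irreducibility, which the paper has already arranged to hold generically; here one must check that the specific $F_1$ instances do not factor over any ring $\C[x^{1/d},y^{1/d}]$ into branches of the same hypersurface, the pathology exhibited by the cross-polytope in Example~\ref{rem:excludeTrivialPolytopes}~3). I expect this to be the main obstacle: unlike the clean algebraic reductions above, ruling out such hidden factorizations requires either a direct argument on the Newton polytope (showing it is not a $d$-fold dilate positioned symmetrically) or an appeal to the generic nature of strong irreducibility together with the observation that the $F_1$ triangle lacks the reflective symmetry that produced the bad factorization for the cross-polytope. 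Once strong irreducibility is secured, the corollary follows immediately from the main theorem.
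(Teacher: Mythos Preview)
Your approach matches the paper's: show that the stated parameter conditions make the $\Gamma$-factors in the $F_1$ coefficient correspond one-to-one with the sides of the Newton polygon (a pentagon, cf.\ Fig.~\ref{fig:F_1(-5,-4,-4,3)}), so the polynomial is hypergeometric in the sense of Definition~\ref{def:trulyHGpolynomial}, and then invoke Theorem~\ref{thm:HGpolyHasOptimalAmoeba}. The paper's actual proof is two sentences and does \emph{not} verify strong irreducibility at all---your flagging of that step as the main obstacle is more scrupulous than the paper itself, which simply omits the check.
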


\begin{proof}
The imposed conditions on the parameters of $F_1(a,b_1,b_2,c;x,y)$
yield a one-to-one correspondence between the $\Gamma$-factors in
the coefficient of the power series expansion of~$F_1$ and the
sides of the Newton polygon of its polynomial instance in
question. This polynomial is therefore hypergeometric in the sense
of Definition~\ref{def:trulyHGpolynomial}.
\end{proof}

In Fig.~\ref{fig:F_1(-5,-4,-4,3)} we depict the amoeba of the optimal hypergeometric polynomial $F_1(-5,-4,-4,$ $3;x,y).$

\begin{figure}[ht!]
\begin{picture}(80,80)

\put(0,0){\circle*{5}} \put(20,0){\circle*{5}}
\put(40,0){\circle*{5}} \put(60,0){\circle*{5}}
\put(80,0){\circle*{5}}

\put(0,20){\circle*{5}} \put(20,20){\circle*{5}}
\put(40,20){\circle*{5}} \put(60,20){\circle*{5}}
\put(80,20){\circle*{5}}

\put(0,40){\circle*{5}} \put(20,40){\circle*{5}}
\put(40,40){\circle*{5}} \put(60,40){\circle*{5}}

\put(0,60){\circle*{5}} \put(20,60){\circle*{5}}
\put(40,60){\circle*{5}}

\put(0,80){\circle*{5}} \put(20,80){\circle*{5}}

\put(0,0){\line(1,0){80}} \put(0,0){\line(0,1){80}}
\put(0,80){\line(1,0){20}} \put(80,0){\line(0,1){20}}
\put(20,80){\line(1,-1){60}}

\end{picture}
\quad\quad\quad
\includegraphics[width=5cm]{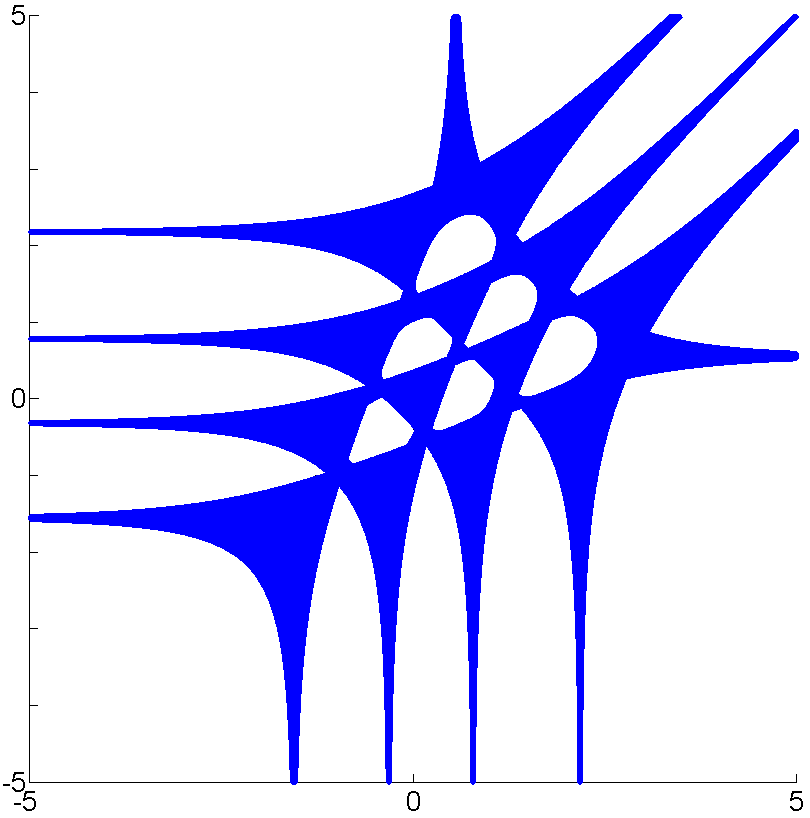}
\caption{The Newton polygon and the amoeba of the Appell
polynomial $F_1(-5,-4,-4,3;x,y)$} \label{fig:F_1(-5,-4,-4,3)}
\end{figure}

Observe however that not every polynomial instance of
$F_{1}(a,b_1,b_2,c;x,y)$ is optimal. The optimal property is in
general not possessed by the~$F_1$ polynomials whose Newton
polytopes do not have sides that are orthogonal to the gradients
of the linear forms in the defining Ore-Sato coefficient. For
instance, $F_{1}(-4,5,-7,9;x,y)$ is {\it not} an optimal
polynomial, its Newton polygon being just a triangle.

We further remark that the zero locus of a rational instance of a
classical hypergeometric function need not be an optimal
hypersurface. For example, the numerator of the rational function
$F_2(5;3/2,1;-1/2,2;x,y)$ is not an optimal polynomial.

\section{examples}\label{sec:examples}

In this section we collect examples of multivariate hypergeometric
polynomials together with their Newton polytopes and amoebas.

\begin{example}\label{ex:8-gon}\rm
The hypergeometric Horn system defined by the Ore-Sato coefficient
$$
\varphi(s,t)=\Gamma(s-6) \Gamma(s+t-10) \Gamma(t-6)
\Gamma(-s+t-4)\Gamma(-s)\Gamma(-s-t+2) \Gamma(-t)\Gamma(s-t-4)
$$
admits the following polynomial solution:

$p_{1}(x,y)=21x^{2}+64x^{3}+21x^{4}+126xy+2016x^{2}y+4704x^{3}y+2016x^{4}y+126x^{5}y+21y^{2}+2016xy^{2}+
22050x^{2}y^{2}+47040x^{3}y^{2}+22050x^{4}y^{2}+2016x^{5}y^{2}+21x^{6}y^{2}+64y^{3}+4704xy^{3}+47040x^{2}y^{3}+98000x^{3}y^{3}+
47040x^{4}y^{3}+4704x^{5}y^{3}+64x^{6}y^{3}+21y^{4}+2016xy^{4}+22050x^{2}y^{4}+47040x^{3}y^{4}+22050x^{4}y^{4}+2016x^{5}y^{4}+
21x^{6}y^{4}+126xy^{5}+2016x^{2}y^{5}+4704x^{3}y^{5}+2016x^{4}y^{5}+126x^{5}y^{5}+21x^{2}y^{6}+64x^{3}y^{6}+21x^{4}y^{6}.$

(The system itself is too cumbersome to display and we omit it.)
The Newton polygon and the amoeba of $p_{1}(x,y)$ are shown in
Fig.~\ref{fig:8-gonAndAmoeba}. This amoeba turns out to be
optimal.
\end{example}

\begin{figure}[ht!]
\begin{picture}(120,120)

\put(40,0){\circle*{5}} \put(60,0){\circle*{5}}
\put(80,0){\circle*{5}}

\put(20,20){\circle*{5}} \put(40,20){\circle*{5}}
\put(60,20){\circle*{5}} \put(80,20){\circle*{5}}
\put(100,20){\circle*{5}}

\put(0,40){\circle*{5}} \put(20,40){\circle*{5}}
\put(40,40){\circle*{5}} \put(60,40){\circle*{5}}
\put(80,40){\circle*{5}} \put(100,40){\circle*{5}}
\put(120,40){\circle*{5}}

\put(0,60){\circle*{5}} \put(20,60){\circle*{5}}
\put(40,60){\circle*{5}} \put(60,60){\circle*{5}}
\put(80,60){\circle*{5}} \put(100,60){\circle*{5}}
\put(120,60){\circle*{5}}

\put(0,80){\circle*{5}} \put(20,80){\circle*{5}}
\put(40,80){\circle*{5}} \put(60,80){\circle*{5}}
\put(80,80){\circle*{5}} \put(100,80){\circle*{5}}
\put(120,80){\circle*{5}}

\put(20,100){\circle*{5}} \put(40,100){\circle*{5}}
\put(60,100){\circle*{5}} \put(80,100){\circle*{5}}
\put(100,100){\circle*{5}}

\put(40,120){\circle*{5}} \put(60,120){\circle*{5}}
\put(80,120){\circle*{5}}

\put(40,0){\line(1,0){40}} \put(80,0){\line(1,1){40}}
\put(120,40){\line(0,1){40}} \put(120,80){\line(-1,1){40}}
\put(80,120){\line(-1,0){40}} \put(40,120){\line(-1,-1){40}}
\put(0,80){\line(0,-1){40}} \put(0,40){\line(1,-1){40}}

\end{picture} \quad\quad\quad\quad \includegraphics[width=7cm]{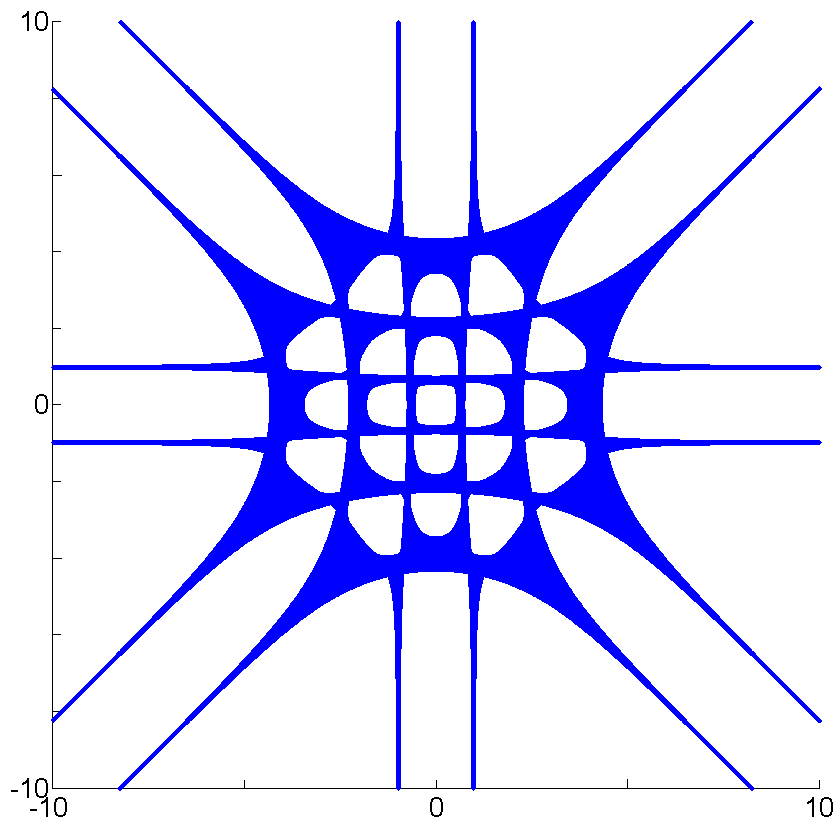}\caption{The
Newton polygon and the amoeba of $p_{1}(x,y)$}
\label{fig:8-gonAndAmoeba}
\end{figure}

\begin{example}\label{ex:simplicialPoly}
\rm The next example shows that the number of $\Gamma$-factors in
the Ore-Sato coefficient of an optimal hypergeometric polynomial
can be strictly smaller than the number of faces of its Newton
polytope.  The hypergeometric system defined by the Ore-Sato
coefficient
\begin{equation}
\varphi(s,t)=\Gamma\left(s+2t-5\right)\Gamma\left(-2s-t-4\right)\Gamma\left(-s-5t+1\right)
\label{moreInvolvedOSCoeff}
\end{equation}
has the following polynomial solution:

$p_{2}(x,y)=
2421619200x^{5}+172972800x^{6}+2882880x^{7}+14560x^{8}+20x^{9}+174356582400x^{2}y+
48432384000x^{3}y+2421619200x^{4}y+34594560x^{5}y+160160x^{6}y+208x^{7}y+2421619200xy^{2}+
691891200x^{2}y^{2}+21621600x^{3}y^{2}+160160x^{4}y^{2}+286x^{5}y^{2}+524160xy^{3}+14560x^{2}y^{3}+
56x^{3}y^{3}+32y^{4}+xy^{4}. $

The support of $p_{2}(x,y)$ (bounded by the singular divisors of
the corresponding Ore-Sato coefficient) and its amoeba are
depicted in Fig.~\ref{fig:simplicialPolyAndAmoeba}.
\end{example}

\begin{figure}[ht!]
\includegraphics[width=8.2cm]{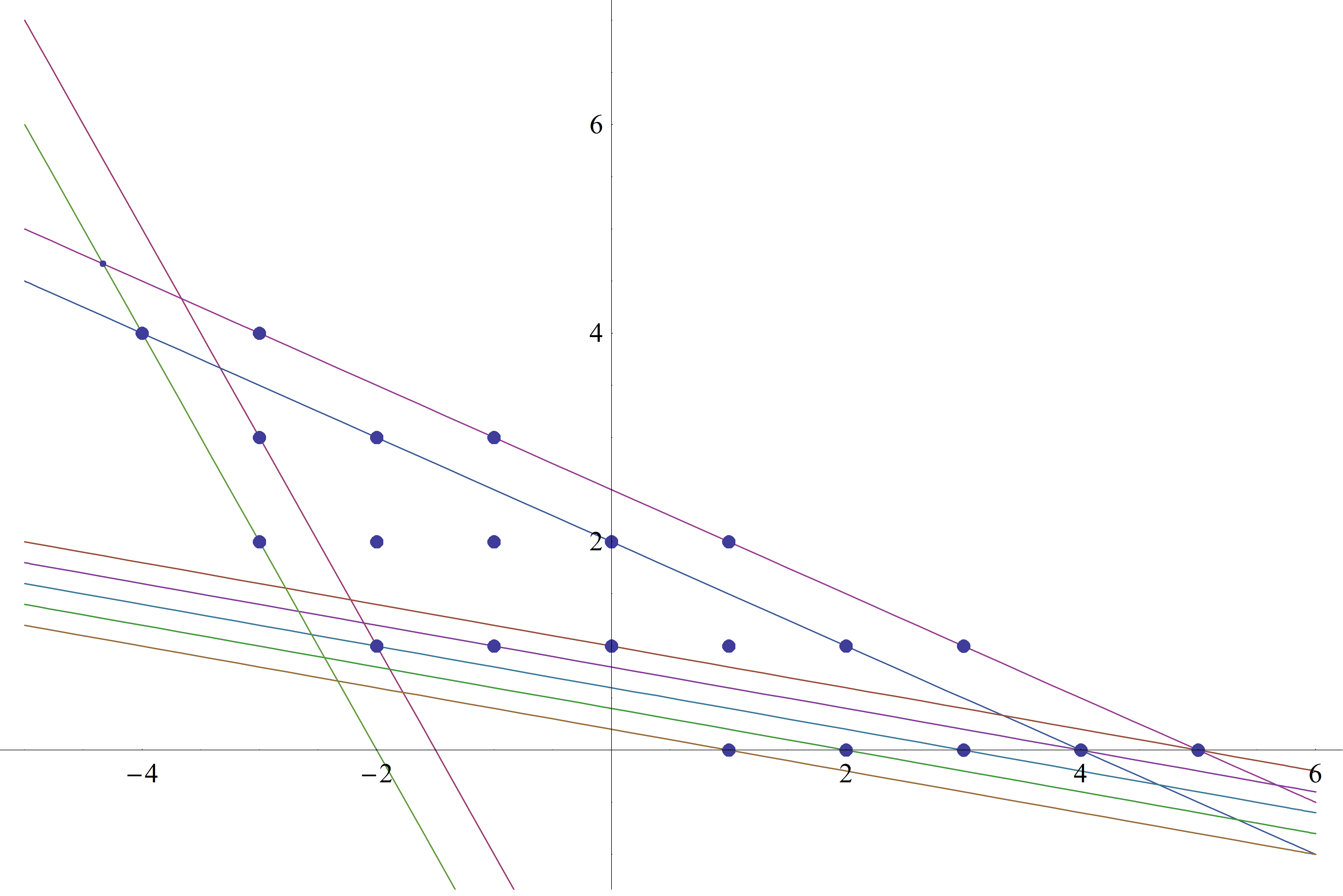}
\quad
\includegraphics[width=7cm]{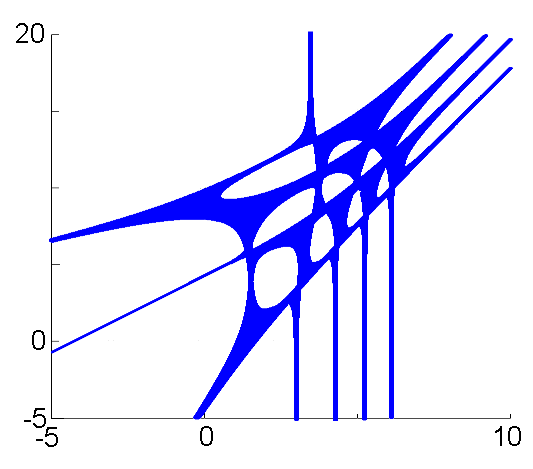}
\caption{The support of $p_{2}(x,y)$ (bounded by the singular
divisors of~(\ref{moreInvolvedOSCoeff})) and its amoeba}
\label{fig:simplicialPolyAndAmoeba}
\end{figure}

\begin{example}\label{ex:HGSimplicialComplex}\rm
The bivariate hypergeometric polynomial supported in the
quadrilateral with the vertices $(2,0),(3,2),(2,3)$ and $(0,1)$ is
given by $p_{3}(x,y) = 240 x^2 + 3 y + 240 x y + 1080 x^2 y + 30 x
y^2 + 180 x^2 y^2 + 36 x^3 y^2 + 2 x^2 y^3.$
Fig.~\ref{fig:quadrilateral} {\it(a-c)} show the affine
amoeba~$\mathcal{A}_{p_3}$, the compactified amoeba of
$p_{3}(x,y)$, the weighted compactified amoeba of the 6th Hadamard
power of $p_{3}(x,y).$ Fig.~\ref{fig:quadrilateral}~{\it(d)} shows
the vanishing connected component with the order $(2,2)$ in the
complement of the weighted compactified amoeba of a deformed
version of $p_{3}(x,y).$ We remark that the small component
vanishes exactly at the point with the coordinates $(2,2),$ that
is, at the order of this component.

\begin{figure}[ht]
\begin{minipage}[h]{0.24\linewidth}
\center{\includegraphics[width=0.99\linewidth]{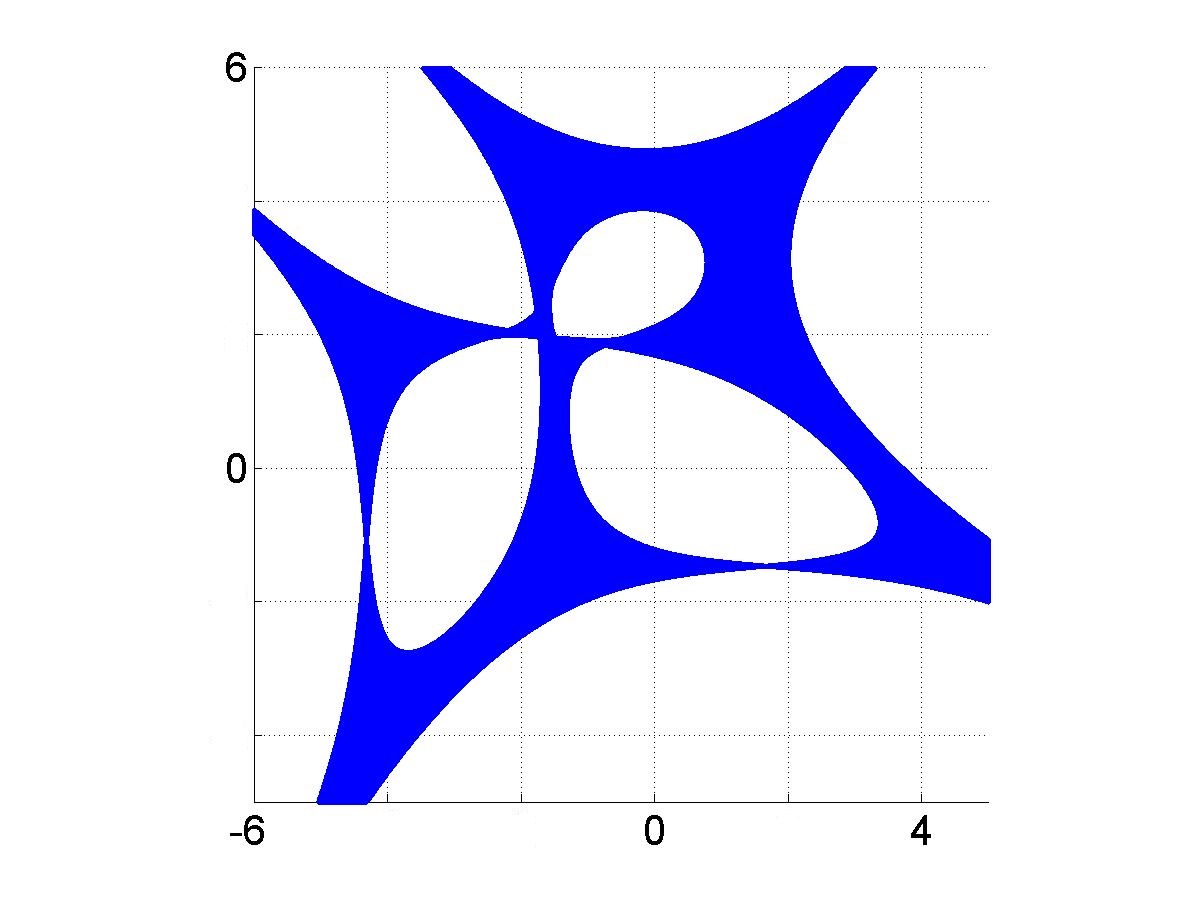} \\ (a)}
\end{minipage}
\begin{minipage}[h]{0.24\linewidth}
\center{\includegraphics[width=0.99\linewidth]{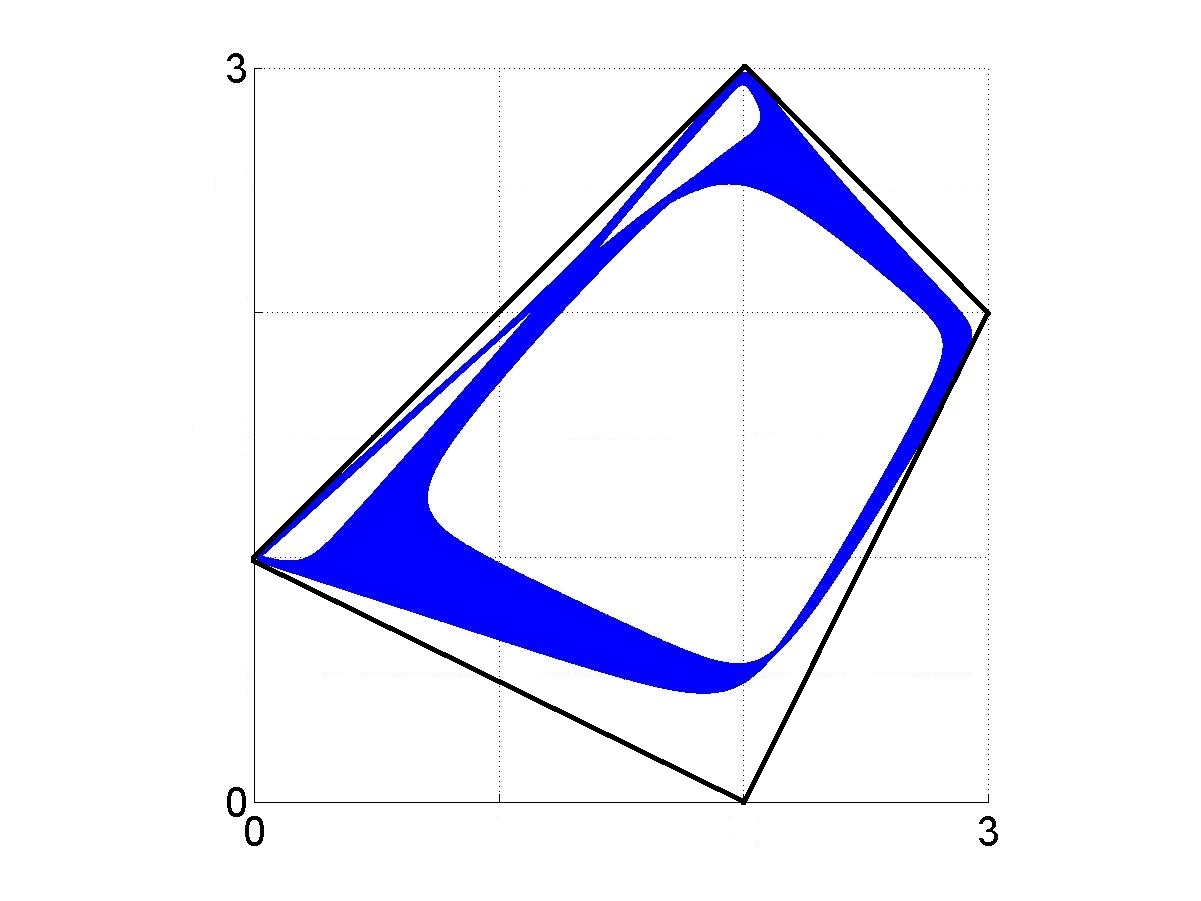} \\ (b)}
\end{minipage}
\begin{minipage}[h]{0.24\linewidth}
\center{\includegraphics[width=0.99\linewidth]{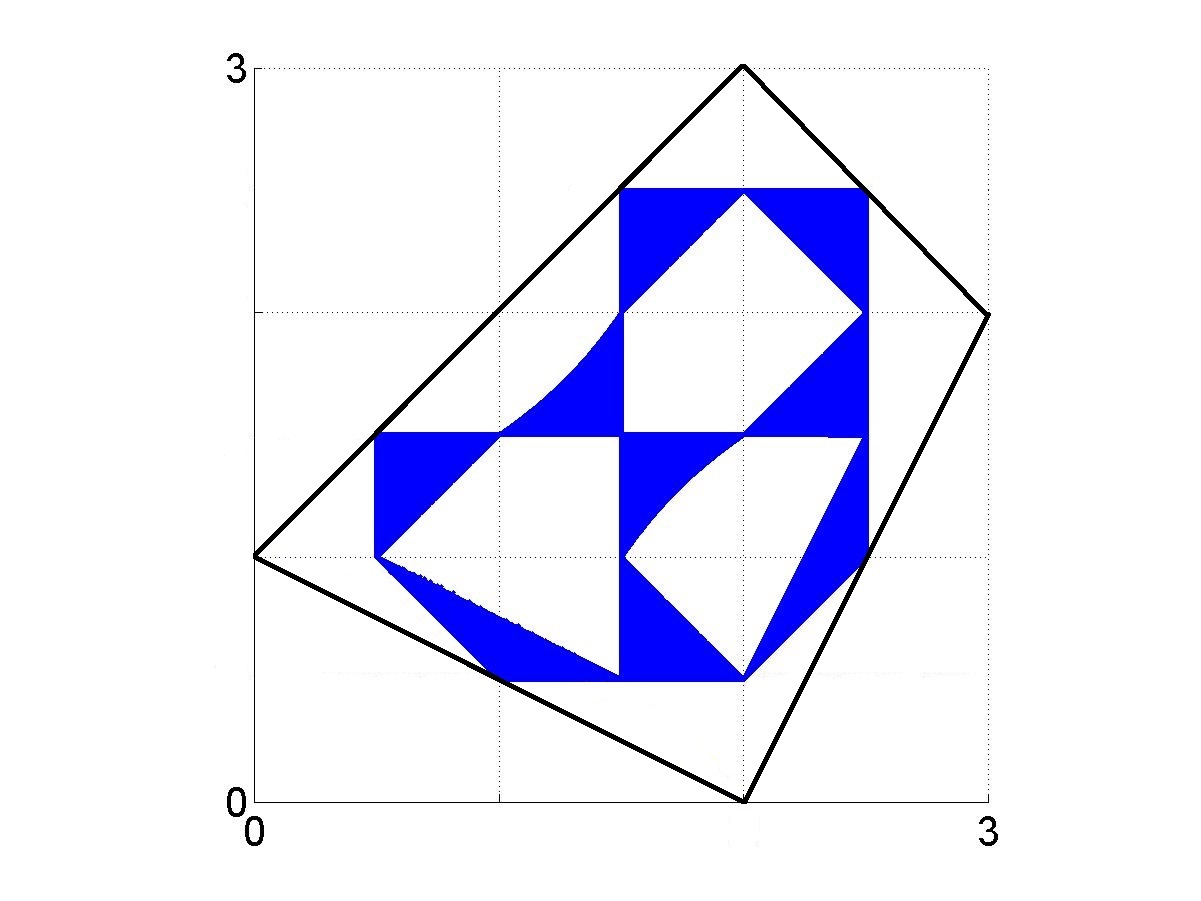} \\ (c)}
\end{minipage}
\begin{minipage}[h]{0.24\linewidth}
\center{\includegraphics[width=0.99\linewidth]{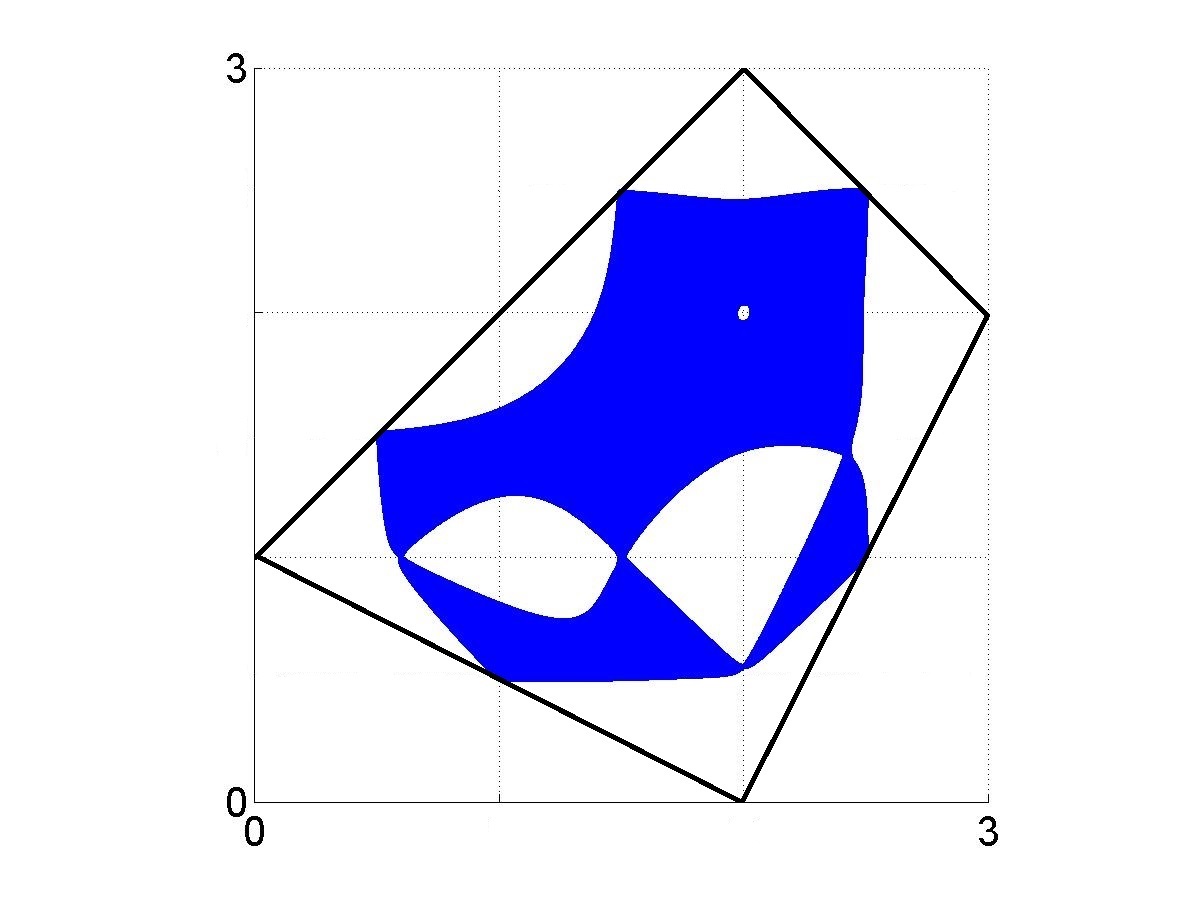} \\ (d)}
\end{minipage}
\caption{The amoeba, compactified amoeba, the weighted
compactified amoeba of the 6th Hadamard power and a vanishing
connected component of the complement to the
$\mathcal{WCA}(p_{3}(x,y))$} \label{fig:quadrilateral}
\end{figure}

\end{example}

\begin{example}\rm
The first maximal minor of the Toeplitz matrix
$$
\left(
\begin{array}{ccccccc}
 x & y & 1 & 0 & 0 & 0 & 0 \\
 1 & x & y & 1 & 0 & 0 & 0 \\
 0 & 1 & x & y & 1 & 0 & 0 \\
 0 & 0 & 1 & x & y & 1 & 0 \\
 0 & 0 & 0 & 1 & x & y & 1 \\
 0 & 0 & 0 & 0 & 1 & x & y
\end{array}
\right)
$$
is the degree~6 bivariate Chebyshev polynomial of the second
kind~\cite{AlexanderssonShapiro}. It is optimal in the coordinates
$\xi=xy,\,\eta=y^2/x$ which make it dense.
\end{example}


\end{document}